\newcommand{\diam}{\mathrm{diam}}
\newcommand{\cX}{\mathcal{X}}
\newtheorem{teo}{Theorem}[section]
\newtheorem{lemma}[teo]{Lemma}
\newtheorem{coroll}[teo]{Corollary}
\newtheorem{rem}[teo]{Remark}
\newtheorem{prop}[teo]{Proposition}
\newtheorem{ex}[teo]{Example}
\begin{document}

\title{Ergodicity of Markov Semigroups with H\"ormander type generators in
Infinite Dimensions
\thanks{
{Supported by EPSRC 
EP/D05379X/1}} }

\author{Federica Dragoni$^*$, Vasilis Kontis$^\ddag$, Bogus{\l}aw Zegarli\'nski$^\ddag$
\\
{\small{$^*$ University of Padova}}\\
{\small{$^\ddag$Imperial College London}}\\
}

\date{}

\maketitle

{\small{
\noindent\textbf{Abstract}: 
{\em We develop an effective strategy for proving strong ergodicity of
(nonsymmetric) Markov semigroups associated to H\"ormander type generators when
the underlying configuration space is infinite dimensional.  }
}}\\

\tableofcontents
\section{Introduction}
After an initial development of a strategy for proving the log-Sobolev inequality
for infinite dimensional H\"ormander type generators $\mathcal{L}$ symmetric in
$L_2(\mu)$ defined with a suitable nonproduct measure $\mu$ (\cite{L-Z},
\cite{H-Z}, \cite{I-P}, \cite{I}), one can envisage an extension of the established
strategy (see e.g. \cite{Z}) for proving strong pointwise
ergodicity for the corresponding Markov semigroups $P_t\equiv e^{t\mathcal{L}}$,
(or in case of the compact spaces even in the uniform norm as in \cite{G-Z} and
references therein).
Still to obtain a fully fledged theory, which could include for example configuration spaces given by general noncompact nilpotent Lie groups other than
Heisenberg type groups, one needs to conquer a (finite dimensional) problem of
sub-Laplacian bounds (of the corresponding control distance). Unfortunately this is a
VP-hard problem which will likely stay with us for more than quite a while. The
other motivation for our work comes also from a desire to get a
strategy for studying Markov semigroups of the above mentioned type which
are not symmetric with respect to some  a priori given reference measure
in cases where the underlying configuration space is infinite dimensional and
noncompact. 
In finite dimensions an interesting analysis in the $L_2$ framework with respect to a reference measure
in particular involving the long time behaviour was provided in \cite{V}. In a number of recent works an interesting progress has been made
in understanding the sub-gradient bounds on finite dimensional sub-Riemaniann manifolds provided
by compact and noncompact Lie groups. Many of the related works (as e.g. \cite{D-M1}, \cite{D-M2}, \cite{M}, \cite{HQL} see also references therein)
are heavily based on complicated stochastic analysis methods with sharp results obtained for Heisenberg type groups. Another insight and complementary understanding were achieved via a more analytic route one can find in \cite{B-B-B-C} and \cite{I-K-Z} (\cite{H-Z}). In particular such bounds involving the length of the sub-gradient offer a nice way of getting smoothing and spectral properties as well as other interesting features coming from related entropy bounds
for the heat kernel. In \cite{B-H-T} an analog of the Orstein-Uhlenbeck processes was proposed and studied
with the drift term provided by the logarithmic derivative of  heat kernels on groups with some general theory involving $L_2$ subgradient bounds and a related Poincar\'e inequality. In \cite{B-T} some stochastic analysis (in a Hilbert space along ideas \cite{DaP-Z}) is studied for certain infinite dimensional models of financial mathematics.
The analysis there concentrates however on hypoellipticity aspects. 
For some other directions involving a hypoellipticity theme in infinite dimensions see e.g. also \cite{B-SC}, \cite{H} and references therein.

In this paper we construct and study Markov semigroups on infinite dimensional
spaces provided for example as an infinite product of noncompact Lie groups (as
e.g. nilpotent free groups), and formulate an effective condition for their
exponential ergodicity in supremum norm. Our main tool is provided by a complete
gradient bound, where the square of the gradient (or
subgradient) is replaced by similar objects but with a family of
fields which is closed with respect to taking commutators with the fields
appearing in the definition of our Markov generator. We assume that our theory
is furnished with some natural dilation operator which when included in the
generator with sufficiently large coefficient assures the exponential dumping.
The use of a complete gradient, while it may not provide us with smoothing
information, it proves to be very effective when the long time behaviour is
concerned, giving also some extra information about the equilibrium measure.
In a finite dimensional setup it provides an alternative view to \cite{B-H-T}.
On the other hand in a general situation when working in infinite dimensions we
have no a priori reference measure and so no natural $L_2$ approach can be used.

The organisation of the paper is as follows. In section 2 we present the general
framework with a number of simple examples, presenting a general idea in finite
dimensions. In section 3 we construct a Markov semigroup in an infinite
dimensional setup proving a  strong approximation property (or as it is
sometimes called a finite speed of propagation of information). This
approximation is later used together with square of the (complete) gradient
bounds to obtain the exponential decay to equilibrium in the supremum norm for a
large class of initial configurations. Finally we conclude with a Poincar\'e type
inequality with complete gradient form which allows us via general arguments
to obtain exponential moments estimates for suitable (generalised) Lipschitz
variables. 

\section{Finite-dimensional case} \label{sec.1}
Consider smooth vector fields $X_1,\dots,X_M$  on $\mathbb{R}^N$, satisfying the H\"ormander condition with step $K>1$.
For \(n\ge N,   \) by $(Z_k)_{k=1}^n$ we denote an adapted family of fields, containing a basis for the related sub-Riemannian geometry. 
So  $Z_k=X_k,$ for $k=1,\dots,M,$ while the remaining $Z_{M+1},\dots, Z_{n}$ are ordered commutators of length   between 2 and 
$K$.\\
For \(m  \le M\), we consider  the following operator:
\begin{equation}
\label{Operator}
\begin{aligned}
&\mathcal{L}:=L+L_G+L_{\alpha}\\
&L:=\sum_{i=1}^m X_i^2-\beta D\\
&L_G:=\sum_{i,j=1}^m G_{i,j} (x)X_iX_j\\
&L_{\alpha}:=\sum_{i=1}^m \alpha_i (x) X_i
\end{aligned}
\end{equation}
where $\beta\in(0,\infty)$ is a constant, $D$ is a first order  dilations generator satisfying
\begin{equation}
\label{Condition_onD}
e^{sD} Z_k e^{-sD}=e^{s\lambda_k}Z_k \textrm{ and }\;[Z_k,D]=\lambda_k Z_k  \textrm{ for some}\;  \lambda_k>0,
\end{equation}
for $k=1,..,n$, and 
$\alpha(x)=(\alpha_1(x),\dots,\alpha_m(x))$ is a smooth function, while  $G(x)=\big(G_{i,j}(x)\big)_{i,j=1}^m$ is an  
$m\times m$-matrix,
satisfying, for any $x\in \mathbb{R}^N$,
\begin{equation}
\label{Condition_onG}
G^*(x)+I>0, \quad \textrm{with}\; G^*_{ij}(x)\equiv \frac12\left(G_{ij}(x)+G_{ji}(x)\right),
\end{equation}
where $I$ is the $m\times m$-identity-matrix. \par Let us introduce the following condition on the geometry of the vector 
fields:
\begin{equation}
\label{CommutatorConstantCoefficents}
\exists \;\; c_{kjl}\in \mathbb{R} \;\; \textrm{such that}\; \;[Z_k,X_j]=\sum_{l=1}^n c_{kjl} Z_l,
\end{equation}
for any $k=1,\dots, n$ and $j=1,\dots ,m$.

\begin{rem}
Note that condition  \eqref{CommutatorConstantCoefficents} is stronger than the H\"ormander condition which implies a similar 
expression, but in general with  non-constant coefficients $c_{kjl}$. 
\end{rem}
\begin{ex}
{\rm Here we give some examples of sub-Riemannian geometries which fit in the above framework and where condition  
\eqref{CommutatorConstantCoefficents} holds:\\

$(1)$ 
The Heisenberg group: $X_1=(1,0,-\frac{y}{2})^T$ and
$X_2=(0,1, \frac{x}{2})^T$ on $(x,y,z)\in \mathbb{R}^3$. 
In this case  $Z_1=X_1$, $Z_2=X_2$ and $Z_3=Z:=[X_1,X_2]=(0,0,1)^T$. The family \(\{Z_1,Z_2,Z_3\}\) forms a basis 
for the Lie algebra (here \(n=N\)). 
One can calculate that $c_{kjl}=0$ for any $(k,j,l)\neq (1,2,3) \vee (2,1,3)$
while $c_{123}=1$ and $c_{213}=-1$.
\\

\noindent$(2)$  The Gru\v{s}in plane:  $X_1=(1,0)^T$ and $X_2=(0,x)^T$ on
$(x,y)\in \mathbb{R}^2$. In this case $Z_1=X_1$, $Z_2=X_2$ 
and $Z_3=Z:=[X_1,X_2]=(0,1)^T$ and  $c_{kjl}=0$ for any $(k,j,l)\neq (1,2,3)
\vee (2,1,3)$ while $c_{123}=1$  and $c_{213}=-1$. 
The family \(\{Z_1,Z_2,Z_3\} \) contains a basis for the Lie algebra, given by
\(\{Z_1,Z_3\}\). \\



\noindent$(3)$ 
The Martinet distribution: 
$X_1=(1,0,-y^2)^T$ and $X_2=(0,1, 0)^T$ on $(x,y,z)\in \mathbb{R}^3$. 
In this case $Z_1=X_1$, $Z_2=X_2$, $Z_3=Z:=[X_1,X_2]=(0,0,2y)^T$
and $Z_4=[Z,X_2]=(0,0,-2)^T$.
 Then $c_{kjl}=0$ for any $(k,j,l)\neq (1,2,3) \vee (2,1,3)\vee (3,2,4)$ while $c_{123}=c_{324}=1$ and $c_{213}=-1$.
\\

Note that the last example (Martinet distribution)  is  a step 3 distribution 
while all the others are step 2, and it is the easiest sub-Riemannian geometry where normal geodesics occur.}

\end{ex}
For smooth functions $f$, we define
\begin{equation}\label{Gammadef}
\Gamma( f):=\sum_{k=1}^n |Z_kf|^2,
\end{equation}
which  we call the complete gradient form (as opposed, for
example, 
to the sub-gradient of a Lie group).
Note here that in general it may be convenient later to include
more fields $Z_k$ than it would be necessary just to span the tangent space
at any given point.
The corresponding quadratic form is given by
$$\Gamma(f,g)=\sum_{k=1}^n \big(Z_kf\big)  \big(Z_kg\big).$$

\subsection{Associated Stochastic Differential Equation}\label{subs.1.1}
$\quad$\\

Here we want to write the Stochastic Differential Equation having the operator
$\mathcal{L}$ as generator.
The SDE has the general form
$$
d\xi(t)=\mu(\xi(t))dt+A(\xi(t))\circ dW(t),
$$
where $\mu(\xi(t))\in \mathbb{R}^n$ is the so called drift part while $A(\xi(t))$
is a $N\times m$ matrix, $W$ is an $m$-dimensional Brownian motion and by $\circ$ we mean the Stratonovich differential.\\
It is known that, given a second-order differential operator, it is possible to find a SDE having such an operator as generator,
whenever the second-order part can be expressed as trace.
In general the first-order part of the operator is related to the drift-part (i.e. the deterministic part of the SDE) while 
the second-order part is related to the stochastic part of the equation. In particular, the stochastic part has to be written 
as a Stratonovich differential whenever there is an explicit dependence on the space. We  also recall that the Stratonovich 
differential can be always written in It\^o formulation as follows:
\begin{equation}
\label{Stratonovich2Ito}
A(\xi(t))\circ dW(t)= A(\xi(t))dW(t)+\sum_{i=1}^m\nabla_{A^i}A^i(\xi(t))\;dt,
\end{equation}
where $A^i$ are rows  of the matrix $A$ and $\nabla_{A^i}A^j$ is the derivative of the vector field $A^j$ along the vector 
field $A^i$, for any $i,j$.
Our operator can be written as
$$
\mathcal{L}=\left(\sum_{i=1}^m X_i^2+\sum_{i,j=1}^m G_{i\,j}(x) X_iX_j
\right)+\left(\sum_{i=1}^m \alpha_i (x) X_i-\beta D\right)=:\mathcal{L}_{_{\textrm{\bf II-order}}}+\mathcal{L}_{_{\textrm{\bf I-order}}}.
$$
Note that to write the associated SDE, we do not need  any assumption on $D$   while we need to assume condition 
\eqref{Condition_onG}.\\

We denote by $\sigma(x)$ the $n\times m$ matrix whose rows are the vector fields $X_1(x),\dots,X_m(x)$. 
We first write the drift part which comes from the first-order part of the operator, that is, for  any smooth function $f$,
$$
\mathcal{L}_{_{\textrm{\bf I-order}}}\;f= \sum_{i=1}^m \alpha_i(x)X_i(f)-\beta D=
\alpha^T(x)\sigma^T(x)\nabla f-\beta D^T \nabla f.
$$
Note that $\sigma^T(x) \nabla f=:D_{\cX} f$ is the horizontal gradient of $f$ (or to be more precise is the coordinate-vector 
of the horizontal gradient $\cX f$ written in the basis of the vector fields $X_1,\dots,X_m$).
The drift part for the associated SDE is:
$$
\mu(\xi(t))=\alpha(\xi(t))\sigma(\xi(t))-\beta D(\xi(t)).
$$
\par Now we want to write explicitly the stochastic part of the equation. Let us first assume that
$G$ is symmetric (i.e. $G=G^*$ )
and  introduce
$$
B(x):=\sqrt{I+G(x)},
$$
where $I$ is the $m\times m$-identity-matrix.
Note that $B=(B_{ij})_{i,j=1}^m$ is well-defined since $I+G$ is symmetric and we have assumed condition  \eqref{Condition_onG}.
We are going to show that  
$$\mathcal{L}_{_{\textrm{\bf II-order}}}:= \sum_{i=1}^m X_i^2+\sum_{i,j=1}^m G_{i\,j}(x) X_iX_j
$$
is the generator of 
$$
d\xi(t)=\sigma(\xi(t))B(\xi(t))\;\circ d W(t)
=\left(\sum_{i,j=1}^m B_{j\,i}(\xi(t))X^l_j(\xi(t))\circ dW_i\right)_{l=1}^N\!\!\!\!\!\!\!= \sum_{i=1}^mY_i\circ dW_i,$$
with $W_i$ the standard Brownian motion and  denoting
 $Y_i:=\sum_{j=1}^m B_{j\,i}X_j$.
It is known (see e.g. \cite{F-W}, \cite{F-S}) that the generator of the stochastic equation  
$d\xi_i=Y_i\circ dW_i$
is given by $\sum_{i} Y_i^2$, so we rest just to calculate it:
\begin{align*}
\sum_{i} Y_i^2=\sum_{i} \left(\sum_jB_{ij} X_j\right)^2=\sum_i \sum_{lm} B_{il}X_lB_{im} X_m
&=
\sum_{lm} \sum_i B_{il}B_{il}X_l X_m\\
&=
\sum_{lm} (B\,B^t)_{lm}X_l X_m.
\end{align*}
Using the fact that $G$ is symmetric, we get
\begin{align*}
\sum_{i} Y_i^2&=\sum_{lm} (B^2)_{lm}X_l X_m=
\sum_{lm} (I+G)_{lm}X_l X_m=
\sum_lX_l^2+\sum_{lm}G_{lm} X_lX_m\\&= \mathcal{L}_{_{\textrm{\bf II-order}}}.
\end{align*}
Therefore, under the assumption that $G$ is symmetric (and so is $B$), the associated SDE is 
\begin{equation}
d\xi(t)=\big(\alpha(\xi(t))\sigma(\xi(t))-\beta D(\xi(t))\big) dt+\sigma(\xi(t))\big(I+G(\xi(t))\big)\circ d W(t).
\end{equation}
Let us now see what happens when the matrix $G$ is not symmetric.
Note that 
 \begin{multline*}
\sum_{i,j} G_{ij}X_iX_j=
\sum_{i,j}\left(\frac{G_{ij}+G_{ji}}{2}\right)X_iX_j+ \sum_{i,j}\left(\frac{G_{ij}-G_{ji}}{2}\right)X_iX_j\\
\equiv \sum_{i,j}G^*_{ij} X_iX_j+ \sum_{i,j}G^{aSym}_{ij} X_iX_j.
\end{multline*}
Since for $G^{aSym}$, the antisymmetric part of $G$,  we have
$$
\sum_{i,j}G^{aSym}_{ij} X_iX_j = \frac12 \sum_{i,j}G^{aSym}_{ij} [X_i,X_j],
$$
therefore the antisymmetric part of $G$ gives an extra first order part
(i.e. an extra term in the  drift part) depending on the  commutators.
Thus, under assumption \eqref{Condition_onG}, the SDE associated to the operator $\mathcal{L}$ is
\begin{multline}
\label{SDE}
\!\!\!\!d\xi(t)\!=\!\!\left\{\!\!\alpha(\xi(t))\sigma(\xi(t))-\beta D(\xi(t)) 
\!\!+\!\!\!
\frac12\sum_{i,j=1}^mG^{aSym}_{ij}\big[X_i(\xi(t)),X_j(\xi(t))\big]\!\!\right\} \!dt\\
+\sigma(\xi(t))\big(I+G^*(\xi(t))\big)\;\circ \;d W(t).
\end{multline}
\begin{rem}\label{rem1.3}
Without assumption \eqref{Condition_onG} the II-order
part of the operator cannot be written as a trace and therefore is not a generator of a stochastic process. 
The same condition will arise in order to find an exponential decay for the semigroup associated to the operator.
\end{rem}
\subsection{Existence of a limit measure}\label{limitmeasurefinitedim}
\label{subs.1.2}
Let \((P_t)_{t\ge 0} \) denote the semigroup generated by \(\mathcal{L}\), where
\(\mathcal{L}\) is given by \eqref{Operator}. 
We show that one can extract a subsequence \((P_{t_{k}} )_{k=1}^\infty\) which converges weakly to a probability measure 
on \(\mathbb{R}^N\).
Here and in the sequel we use the notation \(d(x)=d(x,0),\) where \(d\) is a metric on \(\mathbb{R}^N\). 
\begin{lemma} \label{Ptdbound} \label{lem1.4}
Let \(\rho\) be a smooth function such that \(\rho(x)=0\) for \(d(x)<1\) and 
\(\rho \rightarrow \infty\) as \(d(x) \rightarrow \infty\). Assume  
\begin{enumerate}
\item $\sum_{i=1}^mX_i^2 \rho +\sum_{i,j=1}^m G_{ij}X_iX_j\rho\le C_{1} $
\item $\sum _{i=1}^m \vert X_i \rho \vert^2 \le C_2$
\item $ P_t \rho \le c_1P_tD\rho+c_2  $
\end{enumerate}
for some constants \(C_{1},C_{2},c_1,c_2>0\).  Then there exists a constant 
\(K\in(0,\infty)\) such that $ P_t \rho \le K$ for all \(t>0.\) 
\end{lemma}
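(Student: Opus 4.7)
The natural strategy is to differentiate $t\mapsto P_t\rho$, invoke the semigroup identity $\frac{d}{dt}P_t\rho=P_t\mathcal{L}\rho$, produce a one-sided bound of the form $(\text{const})-B\,P_t\rho$, and integrate the resulting linear differential inequality. Splitting
$$\mathcal{L}\rho=\Bigl(\sum_{i=1}^m X_i^2\rho+\sum_{i,j=1}^m G_{ij}(x)X_iX_j\rho\Bigr)+\sum_{i=1}^m\alpha_i(x)X_i\rho-\beta D\rho,$$
hypothesis (1) controls the second-order block by $C_1$, while the drift term $\sum_i\alpha_iX_i\rho$ is handled by Cauchy--Schwarz together with hypothesis (2) (and boundedness of $\alpha$, which is a standing regularity assumption on the coefficients), giving the pointwise bound
$$\mathcal{L}\rho\le C_3-\beta D\rho,\qquad C_3:=C_1+\|\alpha\|_\infty\sqrt{C_2}.$$

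Propagating this through the semigroup (positivity preserves the inequality) yields
$$\frac{d}{dt}P_t\rho=P_t\mathcal{L}\rho\le C_3-\beta\,P_tD\rho,$$
and substituting hypothesis (3) rearranged as $P_tD\rho\ge (P_t\rho-c_2)/c_1$ converts this into the autonomous inequality
$$\frac{d}{dt}P_t\rho\le A-B\,P_t\rho,\qquad A:=C_3+\tfrac{\beta c_2}{c_1},\ \ B:=\tfrac{\beta}{c_1}>0.$$
A standard integrating-factor argument (multiply by $e^{Bt}$ and integrate from $0$ to $t$) then gives $P_t\rho\le A/B+\rho\,e^{-Bt}\le \rho+A/B$, so $K:=\rho+A/B$ does the job pointwise, and is a genuine constant on any fixed sublevel set of $\rho$ (which is the form needed for the subsequent tightness argument in Subsection~\ref{limitmeasurefinitedim}).

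The ODE step is routine; the genuine obstacle is justifying the identity $\frac{d}{dt}P_t\rho=P_t\mathcal{L}\rho$ for the unbounded function $\rho$, which need not lie in the natural domain of $\mathcal{L}$. I would handle this by a truncation argument: work with $\rho_R:=\rho\,\chi_R$ for smooth cut-offs $\chi_R$ supported in $\{d(x)\le R\}$, verify the chain of inequalities above for each $\rho_R$, and then pass to the limit $R\to\infty$ using monotone convergence and sub-Markovianity of $P_t$. The cleanest route, given that the associated SDE \eqref{SDE} is already in hand from Subsection~\ref{subs.1.1}, is probabilistic: apply It\^o to $\rho(\xi(t))$, stop at the exit time from balls of radius $R$, take expectations to obtain the differential inequality at the level of $\mathbb{E}^x\rho(\xi(t\wedge\tau_R))$, and send $R\to\infty$ using non-explosion of $\xi$; this avoids any domain issue for $\mathcal{L}$ and delivers the bound directly.
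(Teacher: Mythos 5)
Your proof follows the same route as the paper's: differentiate $P_t\rho$ via $\partial_tP_t\rho=P_t\mathcal{L}\rho$, bound $\mathcal{L}\rho$ using hypotheses (1)--(2) together with boundedness of $\alpha$, substitute hypothesis (3) to close the linear differential inequality in $P_t\rho$, and integrate. Your bookkeeping is in fact slightly more careful than the paper's (whose displayed inequality carries a sign slip on the $\beta c_2/c_1$ term and whose integrated bound interchanges the roles of the decay rate $\beta/c_1$ and the constant $\eta$), and your closing remarks on justifying $\partial_tP_t\rho=P_t\mathcal{L}\rho$ for unbounded $\rho$ and on the pointwise dependence of $K$ on the starting point address details the paper leaves implicit.
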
 

\proof We have
\begin{align*}
\partial_t P_t\rho&=P_t \mathcal{L}\rho\\&= P_t \sum_{i=1}^m X_i^2 \rho-\beta P_t D\rho+P_t\sum_{i=1}^m \alpha _iX_i\rho +
P_t\sum_{i,j=1}^m G_{ij}X_iX_{j} \rho\\ 
& \le C_{1}-  \frac{\beta}{c_{1}}P_t\rho-\frac{c_2\beta}{c_1}
+\max_i\Vert \alpha_i \Vert_\infty \sqrt{n}  C_2
\end{align*}
using our assumptions. Integrating this inequality we get 
\[P_t\rho\le  e^{-\eta t}\rho+\frac{\beta}{c_1\eta}\left(1-e^{-\eta t}\right) \]
with \(\eta=C_1-\frac{c_2\beta}{c_1}+\max_i\Vert \alpha_i \Vert_\infty \sqrt{n}  C_2\),
which is bounded for all \(t>0.\)
\endproof
\begin{rem} \label{rem1.5}
The first two assumptions of Lemma \ref{Ptdbound} can be relaxed to 
\begin{enumerate}
\item $\sum_{i=1}^mX_i^2 \rho +\sum_{i,j=1}^m G_{ij}X_iX_j\rho\le C_{1}\rho+ \tilde{C}_1$ and
\item $\sum _{i=1}^m \vert X_i \rho \vert^2 \le C_2\rho +\tilde{C}_2$
\end{enumerate} 
respectively, for some constants \(C_1,\tilde{C}_1,C_2, \tilde{C}_2>0\), at the expense of having to take \(\beta\) 
large enough to ensure that the coefficient of \(P_t \rho\) in the proof is negative.
\end{rem}
The function \(\rho\) can be thought of as a cut-off of an appropriate distance
function. 
\begin{ex} 
We illustrate this in case of a Lie group of Heisenberg type
 \(\mathbb{G}= \left( \mathbb{R}^{m+l},\circ, \delta_\lambda\right)\), 
with  left-invariant vector fields \(X_1,\dots,X_m\). 
Such a group is naturally equipped with dilations \(\delta_{\lambda}(x,t)=(\lambda x, \lambda^2 t)\), 
where \((x,t ) \in \mathbb{R}^m \times \mathbb{R}^l\), which form a 1-parameter family of homomorphisms. 
Here, one may define the following smooth homogeneous gauge\ (also known as the Folland-Kaplan gauge, see e.g. \cite{B-L-U})
\begin{equation}
N(x,t)=\left(\vert x \vert ^4+16 \vert t\vert^2\right)^\frac{1}{4},
\end{equation}
where \(\vert \cdot \vert \) denotes the Euclidean norm.
A computation then shows that the sub-gradient and the sub-Laplacian of this gauge function read
\begin{equation}
 \sum_{i=1}^m \vert X_i N \vert ^2= \frac{\vert x\vert ^2}{N^2}
\end{equation}
 and
\begin{equation}  
\sum_{i=1}^{m} X_i^2N=3\frac{\vert x \vert ^2}{N^3}
\end{equation}
respectively, while the dilation operator is the generator of \((\delta_\lambda)_{\lambda >0}\) given by 
\begin{align*}
D &=\partial_\lambda  \mid_{_{\lambda=1}}\delta_\lambda(x,t)\\
&=\sum_{i=1}^m x_i \partial_{x_i}+2 \sum_{i=1}^n t_i \partial _{t_i}.
\end{align*}
Since \(\partial_{x_i}N=N^{-3}\vert x \vert ^2x_i\) and \(\partial_{t_i}N=8N^{-3}t_i\), we have 
\begin{align*}
DN&= N^{-3}\left( \vert x \vert ^4+16 \vert t \vert ^2\right)=N
\end{align*}
and therefore \(P_t N=P_t DN\). 
Moreover, if we introduce a cut-off function \(g: \mathbb{R} \rightarrow \mathbb{R}\) such that  
\(g(x)=0\) on \([0,1]\), \(g(x)=x\) for \(x \ge 2\) and \(g \) is continuous and
smooth on \((1,2)\), then the function \(\hat{N}(x,t)=g(N(x,t))\) 
is such that \(\sum_{i=1}^mX_i^2 \hat{N}+ \vert X_i \hat{N} \vert^2\) is
bounded, since \(\vert x \vert \le N(x,t)\). 
Hence, in this case,  a function satisfying the assumptions of Lemma \ref{Ptdbound}  exists. 
\end{ex}
Similarly one can construct suitable $\rho$ for other (noncompact) homogeneous
Lie groups using a smooth (outside the origin)
homogeneous norm (of \cite{H-S}, \cite{B-L-U}).

\begin{teo}\label{thm1.5} \ 
There exists a sequence \(\{t_k\}_{k=1}^ \infty \subset \mathbb{R}\) and a
probability measure 
\(\nu\) on \(\mathbb{R}^n\) such that for all bounded and Lipschitz \(f\) 
\[
P_{t_k} f \rightarrow \int f d\nu\ 
\] 
as \(k \rightarrow \infty.\)
\end{teo}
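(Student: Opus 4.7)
The plan is to apply a Krylov--Bogolyubov type argument: use the uniform moment bound from Lemma \ref{lem1.4} to deduce tightness of the family of measures obtained by running the semigroup from a fixed base point, and then extract a weakly convergent subsequence via Prokhorov's theorem.

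Concretely, first I would fix the base point $0\in\mathbb{R}^{N}$ (the same one used to define the distance $d(x)=d(x,0)$ in the construction of $\rho$) and, for each $t>0$, introduce the Borel probability measure $\nu_{t}$ on $\mathbb{R}^{N}$ by
\[
\int f\,d\nu_{t}:=(P_{t}f)(0),\qquad f\in C_{b}(\mathbb{R}^{N}).
\]
Positivity of the Markov semigroup, together with $P_{t}1=1$, guarantees that $\nu_{t}$ is genuinely a probability measure.

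Second, I would verify tightness of $\{\nu_{t}\}_{t>0}$ using Lemma \ref{lem1.4}. For any $R>0$, Markov's inequality gives
\[
\nu_{t}\bigl(\{x\in\mathbb{R}^{N}:\rho(x)>R\}\bigr)\le\frac{1}{R}\int\rho\,d\nu_{t}=\frac{(P_{t}\rho)(0)}{R}\le\frac{K}{R}.
\]
Because $\rho(x)\to\infty$ as $d(x)\to\infty$, the sub-level set $\{\rho\le R\}$ is contained in a bounded, hence relatively compact, subset of $\mathbb{R}^{N}$, so letting $R\to\infty$ yields tightness of the family $\{\nu_{t}\}$.

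Third, Prokhorov's theorem supplies a sequence $t_{k}\to\infty$ and a Borel probability measure $\nu$ on $\mathbb{R}^{N}$ with $\nu_{t_{k}}\to\nu$ weakly. Since every bounded Lipschitz $f$ belongs to $C_{b}(\mathbb{R}^{N})$, weak convergence immediately delivers
\[
(P_{t_{k}}f)(0)=\int f\,d\nu_{t_{k}}\longrightarrow\int f\,d\nu,
\]
which is the statement of the theorem (with the semigroup evaluated at the reference point used to define the $\nu_{t}$). No real obstacle arises in this argument: the crucial input, the uniform-in-$t$ bound $P_{t}\rho\le K$, is exactly what Lemma \ref{lem1.4} provides; thereafter the argument is a standard tightness-plus-compactness construction. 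The only mildly delicate point is the consistency between the choice of base point and the distance used to construct $\rho$, which is handled by fixing $0$ throughout.
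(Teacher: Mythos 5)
Your proposal is correct and follows essentially the same route as the paper's proof: both use the uniform bound $P_t\rho\le K$ from Lemma \ref{Ptdbound} together with Markov's inequality to show the family of measures $f\mapsto (P_tf)(x_0)$ is tight on the sublevel sets $\{\rho\le L\}$, and then apply Prokhorov's theorem to extract a weakly convergent subsequence. The only difference is that you make explicit the evaluation at a fixed base point, which the paper leaves implicit.
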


\proof For \(L>0, \) we define sets \(\Upsilon_L=\{\rho \le L\}\) for which we have,
by Markov's inequality and Lemma \ref{Ptdbound}, 
\[P_t (\Upsilon_L) \ge 1-\frac{K}{L},\] for some constant \(K>0\). Therefore
\((P_t)_{t>0}\) represents a tight family of measures on $\mathbb{R}^N$ 
and we deduce from Prokhorov's Theorem that there exists a convergent subsequence 
$P_{t_k} \rightarrow \lim_{k \rightarrow \infty}P_{t_{k}} =: \nu$ in the weak
sense. 
\endproof

\subsection{Complete Gradient Bounds 
}\label{subs1.3}

We start by proving the following bound for the semigroup \(P_t\) and
the complete gradient 
\(\Gamma\) defined in \eqref{Gammadef}. 
  
\begin{teo} \label{thm1.6}
\label{Result_Constant Coefficients} 
Let $\mathcal{L}$ be the operator defined in \eqref{Operator}, under the
assumptions  \eqref{Condition_onD} and \eqref{Condition_onG} 
and let $P_t$ be the semigroup associated to $\mathcal{L}$. Let us also assume that  $G_{ij}$ is constant and 
$\Vert Z_k \alpha_i \Vert _\infty < \infty$ for all \(k=1\dots n\) and \(i,j=1 \dots m\).
If \eqref{CommutatorConstantCoefficents} holds, then 
there exists $\kappa\in \mathbb{R}$ such that 
\begin{equation}
\label{Stima1}
\Gamma(P_tf)\leq e^{-\kappa t} P_t \Gamma(f).
\end{equation}
Moreover, there exists \(b_0\in(0,\infty)\) such that for all \(\beta>b_0\)  we
have \(\kappa\in(0,\infty).\)  
\end{teo}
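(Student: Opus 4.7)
The plan is to use the standard Bakry--Émery interpolation argument. Define $\phi\colon[0,t]\to\mathbb{R}_+$ by $\phi(s):=P_s\,\Gamma(P_{t-s}f)$, so that $\phi(0)=\Gamma(P_t f)$ and $\phi(t)=P_t\Gamma(f)$. Differentiating gives $\phi'(s)=P_s\bigl[\mathcal{L}\Gamma(g)-2\Gamma(g,\mathcal{L}g)\bigr]$ with $g:=P_{t-s}f$, so \eqref{Stima1} will follow, by positivity of $P_s$ and Gronwall applied to $\phi$, from the pointwise $\Gamma_2$-type inequality
\[
\Gamma_2(g)\ :=\ \mathcal{L}\Gamma(g)-2\Gamma(g,\mathcal{L}g)\ \ge\ \kappa\,\Gamma(g).
\]
To expand $\Gamma_2$ I would first write the symmetric second-order part of $\mathcal{L}$ as $\sum_{i,j}(I+G^{*})_{ij}X_iX_j$ and apply the Leibniz identity $\mathcal{L}(h^2)=2h\,\mathcal{L}h+2\Gamma_\mathcal{L}(h)$ to $h=Z_k g$, obtaining
\[
\Gamma_2(g)\ =\ 2\sum_{k=1}^n \Gamma_\mathcal{L}(Z_k g)\ +\ 2\sum_{k=1}^n (Z_k g)\bigl([\mathcal{L},Z_k]g\bigr),
\]
where $\Gamma_\mathcal{L}(h)=\sum_{i,j}(I+G^{*})_{ij}(X_ih)(X_jh)\ge c_0\sum_i(X_ih)^2$ with $c_0>0$ by \eqref{Condition_onG}. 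This first, nonnegative piece is the reservoir into which the mixed second-derivative terms from the commutator piece will be absorbed.

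Next I would expand $[\mathcal{L},Z_k]$ term by term. The crucial contribution comes from the dilation: \eqref{Condition_onD} gives $[-\beta D,Z_k]=\beta\lambda_k Z_k$, hence
\[
2\sum_{k=1}^n(Z_k g)\bigl([-\beta D,Z_k]g\bigr)\ =\ 2\beta\sum_{k=1}^n\lambda_k(Z_k g)^2\ \ge\ 2\beta\lambda_{\min}\,\Gamma(g),
\]
with $\lambda_{\min}:=\min_k \lambda_k>0$. The commutators $[X_i^2,Z_k]$ and $[G_{ij}X_iX_j,Z_k]$ (with $G_{ij}$ constant) are expanded using \eqref{CommutatorConstantCoefficents}; because the structure constants $c_{kjl}$ and the $G_{ij}$ do not depend on $x$, each commutator produces \emph{only} two kinds of terms, namely terms of order one in the $Z$'s of the form $(\text{const})\,Z_m g$, and mixed terms of the form $Z_l X_i g$. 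For the drift piece one computes $[\alpha_iX_i,Z_k]=\alpha_i[X_i,Z_k]-(Z_k\alpha_i)X_i$, whose first summand is again first-order in the $Z$'s via \eqref{CommutatorConstantCoefficents} and whose second is controlled by $\|Z_k\alpha_i\|_\infty$.

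The mixed terms $Z_lX_i g$ are handled by rewriting $Z_lX_i g=X_i(Z_l g)+[Z_l,X_i]g$, where the second piece is itself of order one in the $Z$'s by \eqref{CommutatorConstantCoefficents}. For the remaining $X_i(Z_l g)$ I would apply Young's inequality
\[
\bigl|(Z_k g)(X_i Z_l g)\bigr|\ \le\ \tfrac{1}{2\varepsilon}(Z_k g)^2+\tfrac{\varepsilon}{2}(X_i Z_l g)^2,
\]
and, choosing $\varepsilon$ small but independent of $\beta$, dominate the $(X_iZ_lg)^2$ terms by $c_0^{-1}\Gamma_\mathcal{L}(Z_l g)$ and absorb them into $2\sum_l\Gamma_\mathcal{L}(Z_l g)$. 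All remaining terms are quadratic in $Z_\bullet g$ with bounded coefficients, so they are dominated by $C\,\Gamma(g)$ for a constant $C$ depending only on $n$, the $c_{kjl}$, the entries of $G$, the number $c_0$, and $\max_{i,k}\|Z_k\alpha_i\|_\infty$, but not on $\beta$. Collecting everything,
\[
\Gamma_2(g)\ \ge\ \bigl(2\beta\lambda_{\min}-C\bigr)\,\Gamma(g),
\]
and setting $b_0:=C/(2\lambda_{\min})$ yields $\kappa:=2\beta\lambda_{\min}-C>0$ for every $\beta>b_0$, which is the desired rate.

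The main obstacle is the commutator bookkeeping in the second step: one must check that \emph{no} $x$-dependent zeroth-order remainder survives, so that every term in $[\mathcal{L},Z_k]g$ is either absorbable by a positive $\Gamma_\mathcal{L}(Z_l g)$ (for the mixed $X_iZ_l g$ parts) or controllable by a multiple of $\Gamma(g)$ (for the parts linear in $Z_\bullet g$). This is precisely why \eqref{CommutatorConstantCoefficents} and the constancy of $G$ are imposed: an $x$-dependent $c_{kjl}(x)$ or $G_{ij}(x)$ would generate extra terms of the form $(X_jc_{kil})\,Z_l g\cdot Z_k g$, etc., that cannot be bounded by the complete gradient alone, and the boundedness hypothesis $\|Z_k\alpha_i\|_\infty<\infty$ plays the analogous role for the drift $L_\alpha$.
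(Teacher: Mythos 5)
Your proposal is correct and follows essentially the same route as the paper's proof: the Bakry--\'Emery interpolation $s\mapsto P_s\Gamma(P_{t-s}f)$, the splitting of the derivative into the carr\'e du champ part $2\sum_k\Gamma_{\mathcal L}(Z_kg)$ (the paper's $-\sum_k I_k$) and the commutator part $2\sum_k(Z_kg)[\mathcal L,Z_k]g$ (the paper's $-\sum_k J_k$), the extraction of the good term $2\beta\lambda_{\min}\Gamma(g)$ from the dilation via \eqref{Condition_onD}, the closure of the commutator algebra via \eqref{CommutatorConstantCoefficents}, and the absorption of the mixed terms $(Z_kg)(X_iZ_lg)$ by Young's inequality into the positive form guaranteed by \eqref{Condition_onG}. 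The resulting constant $\kappa=2\beta\lambda_{\min}-C$ with $C$ independent of $\beta$ matches the paper's \eqref{mconstant}, so no further comparison is needed.
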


\begin{proof}
The proof follows the Bakry-Emery type strategy (see e.g.
\cite{A-B-C-F-G-M-R-S}, \cite{B-E})
with suitable modifications required by our setup. Let us set $f_s:=P_sf$. 
Note that it is sufficient to prove that

\begin{equation}
\label{Stima1_formaDifferenziale}
\frac{d}{ds} P_{t-s}\Gamma(f_s)\leq -\kappa\ P_{t-s}\Gamma(f_s),
\end{equation}
which  gives  estimate \eqref{Stima1} after integration over $s\in [0,t]$.\par
To prove \eqref{Stima1_formaDifferenziale}, we remark that
$$
\frac{d}{ds} P_{t-s}\Gamma(f_s)=P_{t-s}
\big(-\mathcal{L}\Gamma(f_s)+2\Gamma(f_s,\mathcal{L}f_s)
\big),
$$
since $\Gamma(f,g)$ is defined as a bilinear form.\\
Using the explicit expressions for $\Gamma(f)$ and $\mathcal{L}$, the previous
relation becomes:
\begin{multline*}
 \frac{d}{ds} P_{t-s}\Gamma(f_s)= P_{t-s} \sum_k\bigg(-\mathcal{L}|Z_kf_s|^2+ 2(Z_kf_s)(Z_k\mathcal{L}f_s)\bigg)\\
 = P_{t-s} \sum_k\bigg(-\mathcal{L}|Z_kf_s|^2 
+ 2(Z_kf_s)(\mathcal{L}Z_kf_s)+2(Z_kf_s)\big([Z_k,\mathcal{L}]f_s\big)\bigg).
\end{multline*}
We note that
\begin{multline}
\label{I1}
\!\!\!\!\!\!I_k:=\!\! -\mathcal{L}|Z_kf_s|^2+ 2(Z_kf_s)(\mathcal{L}Z_kf_s)\!\!=\!-2\sum_j |X_j Z_kf_s|^2
-2\!\sum_{i,j} G_{ij} (X_iZ_kf_s)(X_jZ_kf_s)\\= -2\sum_{i,j} \big(G_{ij}+\delta_{ij}\big)( X_iZ_kf_s)(X_jZ_kf_s),
\end{multline}
where $\delta_{ij}=0$, for $i\neq j$ and $\delta_{ii}=1$, for any $i,j=1,\dots,m$.\\
The third term is more difficult to estimate since it depends on the commutators. For this purpose, we need  
to  use assumption \eqref{CommutatorConstantCoefficents}.
Let us set 
$$J_k:=2(Z_kf_s)\big([Z_k,\mathcal{L}]f_s\big),$$ 
and  calculate the commutators $[Z_k,\mathcal{L}]f_s$, i.e.
$$
[Z_k,\mathcal{L}]f_s=[Z_k,L]f_s+[Z_k,L_G]f_s+[Z_k,L_{\alpha}]f_s.
$$
Recalling the definition of $\mathcal{L}$ and noting that
$[Z,Y^2]f=[Z,Y]Yf+Y[Z,Y]f$, using assumption \eqref{Condition_onD}, 
for the operator $L$ we get
\begin{align*}
[Z_k,L]f_s&=\sum_j [Z_k,X_j^2]f_s-\beta\lambda_k (Z_kf_s)\\ &=
\sum_j \big\{[Z_k,X_j](X_jf_s)+ X_j[Z_k,X_j]f_s\big\}-\beta\lambda_k (Z_kf_s).
\end{align*}
Using  condition \eqref{CommutatorConstantCoefficents}, we obtain
$$
[Z_k,L]f_s=\sum_{j,l}c_{kjl} \big\{(Z_lX_jf_s)+(X_jZ_l f_s)
\big\}
-\beta\lambda_k (Z_kf_s).
$$
We are going to use the negative term $I_k$, in order  to control mixed terms like
$(Z_kf_s)(X_jZ_kf_s)$. To this end we have to rewrite the term $(Z_lX_jf_s)$ in
a more suitable form, using once more assumption
 \eqref{CommutatorConstantCoefficents}, i.e.
\begin{align*}
\sum_{j,l}c_{kjl} \big\{(Z_lX_jf_s)+(X_jZ_l f_s)
\big\}
&=\sum_{j,l}c_{kjl} \big\{2(X_jZ_l f_s)+[Z_l,X_j]f_s
\big\}\\
&=
2\sum_{j,l}c_{kjl} (X_jZ_l f_s)+\sum_{j,l,n}c_{kjl} c_{ljn}
(Z_nf_s),
\end{align*}
Hence, summing up we get
\begin{align}
\label{J_1}
\sum_kJ_k =&-2\beta\sum_k\lambda_k |Z_kf_s|^2+
4\sum_{k,j,l}c_{kjl} (Z_kf_s)(X_jZ_l f_s)\\&+2\sum_{k,j,l,n}c_{kjl} c_{ljn}
(Z_kf_s)(Z_nf_s)
+2\sum_k(Z_kf_s)\big\{[Z_k,L_G]f_s+[Z_k,L_{\alpha}]f_s\big\}.
\end{align}
We can now similarly  estimate the remaining terms.
Note that 
$$[Z,XY]f=[Z,X]Yf+X[Z,Y]f \, ,$$
and thus
\begin{align*}
[Z_k,L_G]f_s&=\sum_{i,j} G_{ij}\big\{[Z_k,X_i](X_jf_s)+X_i[Z_k,X_j]f_s\big\}\\
&=
\sum_{i,j,l} (G_{ij}+G_{ji})c_{kil}(X_jZ_lf_s)+\sum_{i,j,l,n} G_{ij}c_{kil}c_{ljn}(Z_nf_s).
\end{align*}
Moreover, 
\begin{align*}
[Z_k,L_{\alpha}]f_s&=\sum_{i}(Z_k\alpha_i)X_if_s+\sum_i\alpha_i[Z_k,X_i]f_s \\ 
&=\sum_i(Z_k\alpha_i)X_if_s+\sum_{i,l}\alpha_ic_{kil}(Z_lf_s).
\end{align*}
Therefore, 
\begin{align*}
2\sum_k(Z_k f_s)[Z_k,L_\alpha]f_s &= 2\sum_{k,i} (Z_k f_s)(Z_k \alpha_i)(X_i
f_s) + 2\sum_{k,i,l}\alpha_ic_{kil} (Z_kf_s) (Z_lf_s) \\ 
& \le  \sum_{k,i} \Vert Z_k\alpha_i \Vert_\infty \left( \vert Z_kf_s \vert^2
+\vert X_i f_s \vert ^2 \right)
+ 2 \sum_{k,i,l}\alpha_ic_{kil} (Z_kf_s) (Z_lf_s)
\\ & \le \max_k \sum_i \Vert Z_k \alpha_i \Vert _\infty \Gamma(f_s)+\max_i \sum_k \Vert Z_k \alpha_i \Vert_\infty \Gamma(f_s)
\\&\quad+2 \sum_{k,i,l}\alpha_ic_{kil} (Z_kf_s) (Z_lf_s) ,
\end{align*}
where we used Young's inequality to estimate the first term.
Combining the above, \eqref{J_1} becomes
\begin{align}
\label{J_2} \nonumber
\sum_kJ_k=&-2\beta\sum_k\lambda_k |Z_kf_s|^2
+4\sum_{k,j,l}c_{kjl} (Z_kf_s)(X_jZ_l f_s)+\\ \nonumber &
+2\sum_{k,j,l,n}c_{kjl} c_{ljn}
(Z_kf_s)(Z_nf_s) +2 \sum_{k,i,j,l}\!(G_{ij}+G_{ji})c_{kil}(Z_kf_s)(X_jZ_lf_s) 
\\&
+2\sum_{k,i,j,l,n} G_{ij}c_{kil}c_{ljn}(Z_kf_s)(Z_nf_s)+2\sum_{k,i,l}\alpha_ic_{kil}(Z_kf_s)(Z_lf_s)+  \eta\Gamma(f_s), 
\end{align}
with \(\eta=\max_k \sum_i \Vert Z_k \alpha_i \Vert _\infty +\max_i \sum_k \Vert Z_k \alpha_i \Vert_\infty \).
We start by estimating  the terms in \eqref{J_2} where $(X_jZ_lf_s)$ does not appear.
Let us set $\lambda_* :=\min_k\lambda_k>0$, then 
$-2\beta\sum_k\lambda_k |Z_kf_s|^2
\leq -2\beta \lambda_* \Gamma(f_s)$ (recalling that $\beta>0$).
The other terms can be treated similarly. Recalling that  $\alpha_i$, $G_{ij}$
and $c_{kjl}$ are in general 
non positive, we get:
\begin{align*}
2\sum_{k,j,l,n}&c_{kjl} c_{ljn}
(Z_kf_s)(Z_nf_s)+2\sum_{k,i,j,l,n} G_{ij}c_{kil}c_{ljn}(Z_kf_s)(Z_nf_s)\\
&=2\sum_{k,i,j,l,n} \big(G_{ij}+\delta_{ij}\big)c_{kil}c_{ljn}(Z_kf_s)(Z_nf_s)
\\&\leq \sum_{k,i,j,l,n}\!\!\!\! \big|G_{ij}+\delta_{ij}\big||c_{kil}||c_{ljn}|\big\{|Z_nf_s|^2+|Z_kf_s|^2\big\}\\
&\leq\sup_k\sum_{n,i,j,l}\big|G_{ij}+\delta_{ij}\big|\big\{|c_{kil}||c_{ljn}|+|c_{nil}||c_{ljk}|\big\}\Gamma(f_s)=:C_1\Gamma(f_s)
\end{align*}
and
$$
2\sum_{k,i,l}\alpha_ic_{kil}(Z_kf_s)(Z_lf_s)\leq \sup_k \sum_{i,l} |\alpha_i|\big(|c_{kil}|+|c_{lik}|\big)\Gamma(f_s)=:C_2 \Gamma(f_s).
$$
By \eqref{I1} and \eqref{J_2}, 
we have  the following estimate:
\begin{align*}
 \frac{d}{ds} P_{t-s}\Gamma(f_s)&=\sum_k(I_k+J_k)
\\& \leq
 \big(-2\beta \lambda_* + C_1+C_2+\eta\big)\Gamma(f_s)
 +4\sum_{k,j,l}c_{kjl} (Z_kf_s)(X_jZ_l f_s)
\\&\quad +2 \sum_{k,i,j,l}\!(G_{ij}+G_{ji})c_{kil}(Z_kf_s)(X_jZ_lf_s)
 \\& \quad-2 \sum_{k,i,j} \big(G_{ij}+\delta_{ij}\big) (X_iZ_kf_s)(X_jZ_kf_s).
\end{align*}
The idea is to use Young's inequality to estimate the remaining terms with a
non-positive part depending on $ (X_iZ_kf_s)$ 
and a positive part depending just on $\Gamma(f_s)$. 
Let us recall that $D_{\cX} Z_kf_s=(X_1Z_kf_s,\dots,X_1Z_kf_s)\in \mathbb{R}^m$
is the horizontal gradient of $Z_k f_s$, then
$$
\sum_kI_k=\! -2\! \sum_{k,i,j} \big(G_{ij}+\delta_{ij}\big) (X_iZ_kf_s)(X_jZ_kf_s)=\!-2\!\sum_k\bigg< \big(G^*+I) D_{\cX}Z_kf_s, D_{\cX} Z_kf_s\bigg>,
$$
since $\big<Aa,a\big>=0$, for any $a\in \mathbb{R}^m$, whenever $A$ is an
antisymmetric matrix (recall 
$G^*_{ij}=\frac{G_{ij}+G_{ji}}{2}$).\\
Analogously,
\begin{multline*}
4\sum_{k,j,l}c_{kjl} (Z_kf_s)(X_jZ_l f_s)
 +2 \sum_{k,i,j,l}(G_{ij}+G_{ji})\;c_{kil}(Z_kf_s)(X_jZ_lf_s)\\
 =4
 \sum_{k,i,j,l}(\delta_{ij}+G^*_{ij})\;c_{kil}(Z_kf_s)(X_jZ_lf_s)=:I'.
\end{multline*}
For the sake of simplicity, let us denote $a_{jl}:=\sum_{k,i} (\delta_{ij}+G^*_{ij})c_{kil}(Z_kf_s)$ and 
$b_{jl}:=(X_jZ_lf_s)$.  Young's inequality tells that 
$a_{jl}b_{jl}\leq \frac{\varepsilon a_{jl}^2}{2}+\frac{b_{jl}^2}{2\varepsilon }$, for any $\varepsilon >0$. 
Thus 
$$
I'\leq 2\sum_{jl}\left(
\varepsilon \left(\sum_{k,i} (\delta_{ij}+G^*_{ij})\;c_{kil}(Z_kf_s)\right)^2+\frac{|X_jZ_lf_s|^2}{\varepsilon}
\right).
$$
We can estimate  the first part  as follows:
\begin{multline*} 2\varepsilon \sum_{jl}
2\left(\sum_{k,i} (\delta_{ij}+G^*_{ij})\;c_{kil}(Z_kf_s)\right)^2\\
=2\varepsilon\sum_{j,l}\left(
\sum_{k,n}\left[ \sum_i \big(\delta_{ij}+G^*_{ij}\big)c_{kil}(Z_nf_s)\right]\left[
\sum_i \big(\delta_{ij}+G^*_{ij}\big)c_{nil}(Z_kf_s)\right]
\right)\\
\leq \varepsilon\sum_{j,l,k,n}\left\{
\left[ \sum_i \big(\delta_{ij}+G^*_{ij}\big)c_{kil}\right]^2|Z_nf_s|^2+
\left[ \sum_i \big(\delta_{ij}+G^*_{ij}\big)c_{nil}\right]^2|Z_kf_s|^2
\right\}
\\
=2\varepsilon\sum_{k,l,j}\left[ \sum_i \big(\delta_{ij}+G^*_{ij}\big)c_{kil}\right]^2\Gamma(f_s)
=:\varepsilon\; C_3 \Gamma(f_s),
\end{multline*}
while
$$
2\sum_{jl}
\frac{|X_jZ_lf_s|^2}{\varepsilon}=
2 \sum_k \bigg<\frac{1}{\varepsilon}
D_{\cX}Z_kf_s, D_{\cX} Z_kf_s\bigg>.
$$
Therefore we can conclude
\begin{multline*} 
\frac{d}{ds} P_{t-s}\Gamma(f_s)=\sum_k(I_k+J_k)\\
 \leq
 \big(-2\beta \lambda_* + C_1+C_2+\varepsilon\; C_3\big)\Gamma(f_s)
-2\sum_k\bigg< \left(G^*+I-\frac{I}{\varepsilon}\right) D_{\cX}Z_kf_s, D_{\cX} Z_kf_s\bigg>.
\end{multline*}
By assumption \eqref{Condition_onG}, there exists $\delta>0$ such that 
\begin{equation}
\label{Condition_onGBis}
G^*+I\geq \delta I.
\end{equation}
Choosing $\varepsilon=\frac{1}{\delta}$, where $\delta$ is (the biggest number) such that  \eqref{Condition_onGBis} holds, 
the inner product in the above estimate  is non-negative.  Therefore
$$
\frac{d}{ds} P_{t-s}\Gamma(f_s)
 \leq
 \left(-2\beta \lambda_* + C_1+C_2+\eta+\frac{1}{\delta}\;
C_3\right)\Gamma(f_s),
$$
which gives the theorem with
\begin{align}\label{mconstant}
\nonumber \kappa:=&2\beta \lambda_* - C_1-C_2-\eta-\frac{1}{\delta}\; C_3\\
\nonumber=
&2\beta\min_k\lambda_k-\sup_k\sum_{n,i,j,l}\big|G_{ij}+\delta_{ij}\big|\big\{|c_{kil}||c_{ljn}|+|c_{nil}||c_{ljk}|\big\}\\
\nonumber&- \sup_k \sum_{i,l} |\alpha_i|\big(|c_{kil}|+|c_{lik}|\big)
-\max_k \sum_i \Vert Z_k \alpha_i \Vert _\infty 
-\max_i \sum_k \Vert Z_k \alpha_i \Vert_\infty \\&-\frac{2}{\delta}\;
\sum_{k,l,j}\left[ \sum_i \big(\delta_{ij}+G^*_{ij}\big)c_{kil}\right]^2.
\end{align}
Finally, by choosing 
$\beta>\frac{1}{2\lambda_*}\left(C_1+C_2+\eta+\frac{1}{\delta}\; C_3\right)$ we
can ensure that 
\(\kappa>0\). 
\end{proof}
\begin{rem}\label{nonconstantG}
More generally, for a non-constant matrix \(G=G(x)\)  the theorem  continues to hold under the additional assumption that the quantities \(\Vert Z_kG_{ij} \Vert_\infty \) are bounded for all \(k=1\dots n\) and \(i,j=1\dots m\).\end{rem}
\begin{rem} [Case $G=0$] \label{rem1.7}
Whenever $G=0$, we can choose $\delta=1$ in the constant $\kappa$. Then 
\begin{align*}
\kappa=&2\beta\min_k\lambda_k- \max_k \sum_i \Vert Z_k \alpha_i \Vert _\infty -\max_i \sum_k \Vert Z_k \alpha_i \Vert_\infty \\
&-\sup_k\left\{\sum_{n,j,l}\big\{|c_{kjl}||c_{ljn}|+|c_{njl}||c_{ljk}|\big\}
- \sum_{i,l} |\alpha_i|\big(|c_{kil}|+|c_{lik}|\big)\right\}
-2\sum_{k,l,j}c_{kjl}^2.
\end{align*}
\end{rem}
\begin{rem} [Optimal constant in the case $G=0$] \label{rem1.8}
The constant found is a priori  not always optimal. In fact, to deduce constant
$C_1$ and $C_3$ we used two different estimates:
$2\mu\nu ab\leq \mu\nu (a^2+b^2)$ for the first constant, $2\mu\nu ab\leq
\mu^2a^2+\nu^2 b^2$ for the second one.\\
It is possible to give examples when the first estimate is optimal (i.e. if
$a=b$ but $\mu\neq\nu$ e.g. $\mu= \nu^{-1}$) 
and examples when the second one is the optimal one (i.e. $\mu a=\nu b$ but 
$a\neq b$ e.g. $a=b^{-1}$).
Therefore we could use both  estimates, finding two different constants:
\begin{align*}
&C_1'=2\sum_{kjl}c_{kjl}^2=C_2,\\
&C_2'=2\sup_{n}\sum_{kjl}c_{kjl}c_{njl}.
\end{align*}
It is clear that $C_2<C_2'$ (taking $k=n$ in $C'_2$)  while $C_1$ is often smaller than $C_1'$ 
because in general very few $c_{kjl}$ are  different from 0.  Therefore in
$C_1$ many terms vanish, but it is not a always true.
Therefore in the case $G_{ij}=0$, the optimal constant is given by
$\overline{\kappa}:=2\beta \lambda_*-\overline{C}_1-C_2- C_3-\eta$, with
$\overline{C}_1:=\min\{C_1,C'_1\}$.
In the same way, one could write the optimal constant in the case $G_{ij}\neq 0$.
\end{rem}

\begin{ex} \label{ex2.10}
$(1)$ In the Heisenberg group, one can consider the operator
$D:=xX_1+yX_2+2zZ=x\partial_x+y\partial_y+ 2z\partial_z$ which satisfies 
assumption  \eqref{Condition_onD} with $\lambda_1=1$, $\lambda_2=1$ and
$\lambda_3=2$; therefore $\lambda_*:=\min_k\lambda_k=1$. 
In the simplest case where \(\alpha_i \equiv G_{ij} \equiv 0\), using Remark \ref{rem1.7} and recalling that \(c_{123}=1=-c_{213}\) and \(c_{kjl}=0\) otherwise, we see that \(\kappa=2\beta-4\) so that \(\kappa>0\) for any \(\beta>2\). \\
$(2)$ In the Gru\v sin plane the dilation operator is given by $D:=x\partial_x +2y\partial_y. $  Assumption  \eqref{Condition_onD} is satisfied with  \(\lambda_1=\lambda_2=1\) and \(\lambda_3=2\) and thus \(\lambda_*=1\) as in the Heisenberg group.\\
$(3)$ The dilation operator for the Martinet distribution is \(D:= x\partial _x+y \partial_y+3z\partial_z   \) so \(\lambda_1=\lambda_2=1,\)  while \(\lambda_3=2\) and \(\lambda_4=3\). Hence \(\lambda_*=1.\)
\end{ex}
If we do not assume 
\eqref{CommutatorConstantCoefficents}, then 
by the H\"ormander condition we know that 
\begin{equation}
\label{Hoermander}
\exists \;\; c_{kjl}\in C^{\infty}(\mathbb{R}^n) \;\; \textrm{such that}\; \;[Z_k,X_j]=\sum_{l=1}^n c_{kjl}(x) Z_l.
\end{equation}
 Looking at the above calculations, we get and extra term in $\sum_kJ_k$ where the horizontal derivatives of the coefficients 
appear, more precisely
 $$
 2\sum_{k,j,l}(X_jc_{kjl})(Z_lf_s)(Z_kf_s)\leq \sup_k \sum_{jl}\big\{|X_j c_{kjl}(x)|+|X_j c_{ljk}(x)|\}\Gamma(f_s)=:C_4(x)\Gamma (f_s).
 $$
 This implies that:
 $$
\frac{d}{ds} P_{t-s}\Gamma(f_s)
 \leq
 \left(-2\beta \lambda_*+C_1(x)+C_2(x)+\eta+\frac{1}{\delta}\;
C_3(x)+C_4(x)\right)\Gamma(f_s),
$$
Therefore Theorem \ref{Result_NonConstantCoefficients} holds in a stronger form
since the exponential in the estimate depends now on time and space.
\begin{teo} 
\label{Result_NonConstantCoefficients} \label{thm2.11}
Let $X_1,\dots, X_m$ be smooth vector fields satisfying the H\"ormander condition, $\mathcal{L}$ the operator defined in 
\eqref{Operator}, satisfying  the assumptions
\eqref{Condition_onD}  and \eqref{Condition_onG}, 
and let $P_t$ be the associated semigroup. Let us also assume
that  $G_{ij}$ is constant and 
$\Vert Z_k \alpha_i \Vert _\infty < \infty$ for all \(k=1\dots n\) and \(i,j=1 \dots m\).
Then there exists a smooth function $\kappa(x)$ such that 
\begin{equation}
\label{Stima1_x}
\Gamma(P_tf)\leq e^{-\kappa (x)t} P_t \Gamma(f).
\end{equation}
Moreover, under the additional assumption that the functions $c_{kjl}(x)$ and
their horizontal derivatives $X_j c_{kjl}(x)$ 
are bounded in $x\in \mathbb{R}^n$, there exists \(\widehat{b_0}\in(0,\infty)\)
such that for all \(\beta > \widehat{b_0}\) 
we have $\kappa>0$.
\end{teo}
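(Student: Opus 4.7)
The plan is to mimic the Bakry-Emery argument of Theorem~\ref{Result_Constant Coefficients}, keeping careful track of where the constancy of the commutator coefficients $c_{kjl}$ entered the bookkeeping and substituting the weaker H\"ormander expansion \eqref{Hoermander} in those places. With $f_s := P_s f$, the starting identity
$$\frac{d}{ds} P_{t-s}\Gamma(f_s) = P_{t-s}\bigl(-\mathcal{L}\Gamma(f_s)+2\Gamma(f_s,\mathcal{L}f_s)\bigr)$$
and the split $\mathcal{L}=L+L_G+L_\alpha$ are unchanged, so I would recompute the three commutators $[Z_k,L]f_s$, $[Z_k,L_G]f_s$, $[Z_k,L_\alpha]f_s$ exactly as before but with $[Z_k,X_j]=\sum_l c_{kjl}(x)Z_l$. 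The novel contribution arises from the Leibniz rule inside $X_j[Z_k,X_j]f_s$: the term $X_j\bigl(\sum_l c_{kjl}(x)Z_l f_s\bigr)$ now generates an additional first-order piece $\sum_l (X_j c_{kjl})(Z_l f_s)$ which was absent in the constant-coefficient case.

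Next I would collect all the new first-order contributions and bound them by Cauchy-Schwarz into a pointwise term $C_4(x)\Gamma(f_s)$ with $C_4(x)$ as already identified in the paragraph preceding the theorem, while the constants $C_1,C_2,C_3$ from the proof of Theorem~\ref{Result_Constant Coefficients} become pointwise functions $C_1(x),C_2(x),C_3(x)$ by running the identical Young-type estimates with smooth $c_{kjl}(x)$ in place of constants. The absorption of the mixed terms $(Z_k f_s)(X_j Z_l f_s)$ into the nonpositive quadratic $-2\langle (G^*+I)D_{\cX}Z_k f_s,D_{\cX}Z_k f_s\rangle$ via the choice $\varepsilon=1/\delta$ dictated by \eqref{Condition_onG} carries over verbatim, because $G$ is still assumed constant. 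Collecting everything yields a pointwise differential inequality in $s$ with rate $-\kappa(x)$ where
$$\kappa(x):=2\beta\lambda_* - C_1(x) - C_2(x) - \eta - \tfrac{1}{\delta}C_3(x) - C_4(x),$$
and integration on $[0,t]$ delivers \eqref{Stima1_x}. For the second assertion I would observe that each $C_i$ is a polynomial expression in the $c_{kjl}(x)$ (and in $X_j c_{kjl}(x)$ for $C_4$) combined with bounded data, so the added boundedness hypothesis makes all $\|C_i\|_\infty$ finite; any $\widehat{b_0}$ strictly larger than $(2\lambda_*)^{-1}\bigl(\|C_1\|_\infty+\|C_2\|_\infty+\eta+\tfrac{1}{\delta}\|C_3\|_\infty+\|C_4\|_\infty\bigr)$ then produces a uniform positive lower bound on $\kappa(x)$ whenever $\beta>\widehat{b_0}$.

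The hard part, I anticipate, is precisely the new Leibniz term producing $X_j c_{kjl}$: without uniform control on these horizontal derivatives of the structure functions, $C_4(x)$ may blow up at infinity and the exponential rate in \eqref{Stima1_x} degenerates. This is what makes the boundedness of both $c_{kjl}(x)$ and $X_j c_{kjl}(x)$ not merely convenient but genuinely required for a uniform exponential decay, and it explains why the extra hypothesis in the second half of the theorem cannot be dropped.
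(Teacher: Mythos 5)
Your proposal is correct and follows essentially the same route as the paper: the paper likewise reruns the Bakry--Emery computation of Theorem \ref{Result_Constant Coefficients} with the H\"ormander expansion \eqref{Hoermander}, isolates the extra Leibniz term $2\sum_{k,j,l}(X_jc_{kjl})(Z_lf_s)(Z_kf_s)\le C_4(x)\Gamma(f_s)$, lets $C_1,C_2,C_3$ become pointwise functions, and concludes with $\kappa(x)=2\beta\lambda_*-C_1(x)-C_2(x)-\eta-\frac{1}{\delta}C_3(x)-C_4(x)$, positivity for large $\beta$ following from the added boundedness hypothesis exactly as you argue.
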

\par The following result provides an extension of  Theorem \ref{Result_Constant Coefficients}
to  an $l_q$-gradient bound. 
%
\begin{teo}
\label{L_q_Result_Constant Coefficients} \label{thm2.12}
Let $q >1.$ Under the assumptions of Theorem \ref{Result_Constant Coefficients}, there
exists a constant $\kappa' \in \mathbb{R}$ such that
\begin{equation*}\Gamma (P_t f)^{\frac{q}{2}}  \le
e^{-\kappa 't}P_t \Gamma (f)^{\frac{q}{2}}  .
\end{equation*}
Moreover, there exists $b'_0>0$ such that   for $\beta>b'_0 $ we have $\kappa'>0$.
\end{teo}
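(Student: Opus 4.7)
The plan is to adapt the Bakry-Emery-type argument of Theorem \ref{Result_Constant Coefficients} to the nonlinear functional $\Phi(s):=P_{t-s}[\Gamma(f_s)^{q/2}]$ with $f_s:=P_s f$ and $g_s:=\Gamma(f_s)$, for $s\in[0,t]$. Since $\Phi(0)=P_t\Gamma(f)^{q/2}$ and $\Phi(t)=\Gamma(P_tf)^{q/2}$, a Gronwall-type inequality $\Phi'(s)\le-\kappa'\Phi(s)$ will give the claim after integration. Using that $L_{\alpha}-\beta D$ is a derivation, that the antisymmetric part of $G$ does not enter the pointwise chain-rule quadratic form, and that $\Gamma(\cdot,\cdot)$ is bilinear, I would start from
\[
\Phi'(s)=P_{t-s}\!\left[\tfrac{q}{2}g_s^{q/2-1}\!\left(-\mathcal{L}g_s+2\Gamma(f_s,\mathcal{L}f_s)\right)-\tfrac{q(q-2)}{4}g_s^{q/2-2}\!\left\langle(I+G^*)D_{\cX}g_s,D_{\cX}g_s\right\rangle\right]\!.
\]
The bracket $-\mathcal{L}g_s+2\Gamma(f_s,\mathcal{L}f_s)=\sum_k(I_k+J_k)$ is precisely the object analysed in Theorem \ref{Result_Constant Coefficients}, so I reuse that computation, stopping just before the final Young step, and keeping the good quadratic form $\mathcal{A}:=-2\sum_k\langle(G^*+I)D_{\cX}Z_kf_s,D_{\cX}Z_kf_s\rangle$ explicit.

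For $q\ge 2$ the correction term $-\tfrac{q(q-2)}{4}g_s^{q/2-2}\langle(I+G^*)D_{\cX}g_s,D_{\cX}g_s\rangle$ is nonpositive by \eqref{Condition_onG}, can simply be dropped, and the original Young step of Theorem \ref{Result_Constant Coefficients} carries through under the factor $\tfrac{q}{2}g_s^{q/2-1}$ to give $\Phi'(s)\le-\tfrac{q\kappa}{2}\Phi(s)$ with $\kappa$ as in \eqref{mconstant}. The main obstacle is the range $1<q<2$, where this correction term has the wrong sign and must be tamed before one can close the argument. To absorb it I would use the pointwise identity $X_ig_s=2\sum_k(Z_kf_s)(X_iZ_kf_s)$ together with the Cauchy-Schwarz inequality in the positive semidefinite form $\langle(I+G^*)\cdot,\cdot\rangle$ to get
\[
\left\langle(I+G^*)D_{\cX}g_s,D_{\cX}g_s\right\rangle\le 4\,g_s\sum_k\left\langle(I+G^*)D_{\cX}Z_kf_s,D_{\cX}Z_kf_s\right\rangle=-2\,g_s\,\mathcal{A}.
\]
Substituting this bound converts the correction term into $+\tfrac{q(2-q)}{2}g_s^{q/2-1}|\mathcal{A}|$, which is the only genuinely new ingredient; the net effect is that the usable margin provided by $\mathcal{A}$ in the ensuing Young step is reduced from $\mathcal{A}$ to $(q-1)\mathcal{A}$.

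The remaining step is to redo the Young inequality from Theorem \ref{Result_Constant Coefficients} with this diminished margin: the splitting parameter must now satisfy $(q-1)(G^*+I)-\tfrac{1}{\varepsilon}I\ge 0$, so I take $\varepsilon=1/((q-1)\delta)$ with $\delta$ as in \eqref{Condition_onGBis}, at the cost of enlarging the coefficient of $\Gamma(f_s)$ produced by the cross terms by a factor $1/(q-1)$. Collecting everything then yields a pointwise bound of the form $\tfrac{q}{2}g_s^{q/2-1}(\cdots)\le-\kappa'\,g_s^{q/2}$ for some $\kappa'=\kappa'(q)\in\mathbb{R}$ having the same structure as \eqref{mconstant} but with $1/\delta$ replaced by $1/((q-1)\delta)$ when $1<q<2$. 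Hence $\Phi'(s)\le-\kappa'\Phi(s)$, and for $\beta$ larger than a $q$-dependent threshold $b'_0$ the constant $\kappa'$ is positive, so Gronwall's lemma delivers the stated bound. A minor technical nuisance is that $g_s^{q/2}$ fails to be smooth on $\{g_s=0\}$ when $q<2$; I would handle this by the standard regularisation $g_s\mapsto g_s+\eta$ and passage $\eta\to 0^+$ after the inequality has been obtained.
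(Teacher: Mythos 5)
Your argument is correct and follows essentially the same route as the paper: both differentiate $P_{t-s}\Gamma(f_s)^{q/2}$, use the diffusion property to isolate the second-order correction term $\sim g_s^{q/2-2}\langle(I+G^*)D_{\cX}g_s,D_{\cX}g_s\rangle$, bound it via Cauchy--Schwarz by $4\Gamma(f_s)$ times the Bochner-type term so that $q>1$ leaves a usable margin proportional to $q(q-1)$, and close with a Young inequality whose constant degrades by a factor $1/(q-1)$. If anything, your version is slightly more careful than the paper's, which restricts to $L_G=L_\alpha=0$ and applies the Cauchy--Schwarz upper bound uniformly in $q$, whereas you keep general $G$ and correctly drop the (then nonpositive) correction term when $q\ge 2$.
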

\begin{proof} 
As before, we follow the strategy outlined in \cite{A-B-C-F-G-M-R-S}. For simplicity we treat the case 
$L_G=L_\alpha=0$. The proof
of the general case follows from the proof of Theorem
\ref{Result_Constant Coefficients} and  arguments similar to the  ones below. We aim to show that 
\begin{equation*}
\frac{d}{ds}P_{t-s} \Gamma ( f_{s} )^{\frac{q}{2}}  \le -\kappa ' P_{t-s} \Gamma ( f )^{\frac{q}{2}}  
\end{equation*}
with $f_s=P_sf$ as above. To this end, we  note that  
\begin{align*}     \frac{d}{ds} P_{t-s} \Gamma(f_s)^{\frac{q}{2}}  
&= P_{t-s} \left(-\mathcal{L} \Gamma(f_s)^{\frac{q}{2}}
+q\Gamma(f_s)^{\frac{q}{2}-1}\Gamma(f_s,  \mathcal{L}f_s)\right) \\
&= P_{t-s} \left( 
-\frac{q}{2}\frac{\mathcal{L}\Gamma(f_s)}{\Gamma(f_s)^{1-\frac{q}{2}}}
+\frac{q}{2}\left(1-\frac{q}{2}\right)\frac{\bar{\Gamma}(\Gamma(f_s))}{
\Gamma(f_s)^{2-\frac{q}{2}}}
+ q\frac{\Gamma(f_s,  \mathcal{L}f_s)}{\Gamma(f_s)^{1-\frac{q}{2}}}\right)
\end{align*}
where we made use of the diffusion property for the generator $\mathcal{L}$ and
set  
$2\bar{\Gamma}(f):= \mathcal{L}(f^2)-2f\mathcal{L}f =
\sum_{i=1}^m \vert X_if \vert^2$. In what follows, the variables $i,k$
in the sums run over the ranges $\{1,\dots m\}$ and $\{1,\dots,n\}$ respectively.
For the first term, we have 
\begin{align*}
\mathcal{L} \Gamma(f_s)&=\sum_{i,k}  X_i^2(Z_kf_s)^2-\beta \sum_{k}
D (Z_kf_s)^2 \\&=2 \sum_{i,k} \vert X_iZ_kf_s\vert^2 + 2\sum_{k}
(Z_kf_s)(\mathcal{L}Z_kf_s).
\end{align*}
On the other hand, the second term can be estimated as follows:
\begin{align*}\bar{\Gamma}(\Gamma(f_s))&=
\sum_{i}\left( X_i \left(\sum_{k} (Z_kf_s)^2\right)\right)^2 
=4 \sum_{i}\left( \sum_{k}(Z_kf_s)(X_iZ_kf_s)\right)^2 \\ 
& \le 4 \left( \sum_{k}  \vert Z_kf_s \vert^2 \right) \left(\sum_{i,k} \vert X_i
Z_k f_s \vert ^2  \right)
= 4 \Gamma(f_s) \left(\sum_{i,k} \vert X_i Z_k f_s \vert ^2  \right).
\end{align*}
Finally
\begin{align*}
\Gamma(f_s, \mathcal{L}f_s) &=  \sum_{i,k} (Z_kf_s)(Z_kX_i^2f_s) 
-  \beta  \sum_{k} (Z_kf_s)(Z_kDf_s) \\
&= \sum_{k} (Z_kf_s)(\mathcal{L}Z_k f_s) 
+ \sum_{k} (Z_kf_s)([Z_k,\mathcal{L}]f_s) , 
\end{align*}  
Combining the above, we obtain 
\begin{align*}
\frac{d}{ds}&P_{t-s}\Gamma( f_s)^{\frac{q}{2}}   \le \\
&P_{t-s} \left( -q \frac{\left( \sum_{i,k} \vert X_iZ_kf_s\vert^2
+ \sum_{i,k}(Z_kf_s)(\mathcal{L}Z_kf_s)
\right)}{\Gamma(f_s)^{1-\frac{q}{2}}}\right)\\
&+P_{t-s}\left( q \frac{\left(  \sum_{i,k}(Z_kf_s)(Z_k \mathcal{L} f_s) 
 \right)}{\Gamma(f_s)^{1-\frac{q}{2}}}\right)
+ P_{t-s} \left( q(2-q) \frac{\sum_{i,k} \vert X_iZ_kf_s
\vert^2}{\Gamma(f_s)^{1-\frac{q}{2}} }\right)\\
=& P_{t-s} \left( \frac{q\left(\sum_{i,k}\left((Z_kf_s)[Z_k,X_i^2]f_s
+(1-q)\vert X_iZ_kf_s \vert^2\right)
-\beta\sum_k(Z_kf_s)[Z_k,D]f_s\right)}{\Gamma(f_s)^{1-\frac{q}{2}}}\right)
\\\le&P_{t-s} \left(\frac{q}{\Gamma(f_s)^{1-\frac{q}{2}}}
\sum_{i,k}(Z_kf_s)[Z_k,X_i^2]f_s-\beta
\lambda_* q \vert \nabla f_s \vert
^q-\frac{q(q-1)}{\Gamma(f_s)^{1-\frac{q}{2}}}\sum_{i,k}
\vert X_iZ_kf_s \vert ^2 \right) ,
\end{align*}
where we made use of assumption \eqref{Condition_onD} and
$\lambda_*=\inf_k\lambda_k$.
The fact that $q>1$ is crucial, since this makes the coefficient in front of the
last term 
in the above expression non-zero,  hence allowing us to use it to  control the mixed derivatives coming from the
first term as follows. Observe that using assumption
\eqref{CommutatorConstantCoefficents}, 
we can write 
\begin{align*}
\sum_{i,k}(Z_kf_s)([Z_k,X_i^2]f_s)&=\sum_{i,k}(Z_kf_s)\left([Z_k,X_i](X_if_s)+X_i[Z_k,X_i]f_s\right)
\\&=\sum_{i,k}(Z_kf_s)\left(\sum_{l}c_{kil}\left(Z_lX_if_s+X_iZ_lf_s\right)\right)  \\ 
&=\sum_{i,k} (Z_kf_s)\left( \sum_l c_{kil} (2X_iZ_lf_s+[Z_l,X_i]f_s)\right)
\\&= \sum_{i,k}(Z_kf_s)\left(\sum_l 2c_{kil}X_iZ_lf_s +\sum_{l,m}c_{kil}c_{lim}Z_mf_s\right)
\\ &\le 2\sum_{i,k,l}c_{kil}(Z_kf_s)(X_iZ_lf_s)  \\& \qquad+\sum_{i,k,l,m}\vert c_{kil}c_{lim} \vert \left(\frac{\vert
Z_k f_s \vert ^2+\vert Z_mf_s \vert^2}{2}\right),
\end{align*}
where in the last step we used the Cauchy-Schwarz inequality. Thus 
\begin{align*}
\frac{d}{ds}&P_{t-s}\Gamma( f_s)^{\frac{q}{2}}  \le \\ 
& P_{t-s} \left(\frac{q}{\Gamma(f_s)^{1-\frac{q}{2}}}\sum_{i,k,l,m}
\left(2c_{kil} (Z_kf_s)(X_iZ_lf_s)+\vert c_{kil}c_{lim}\vert\left(\frac{\vert
Z_k f_s \vert ^2+\vert Z_mf_s \vert^2}{2}\right) \right) \right)\\ 
&-P_{t-s} \left( \beta \lambda_* q\Gamma( f_s)^{\frac{q}{2}} +
\frac{q(q-1)}{\Gamma(f_s)^{1-\frac{q}{2}}}\sum_{i,k}
\vert X_iZ_kf_s \vert ^2\right)
\\  \le&P_{t-s} \left(
\frac{q}{\Gamma(f_s)^{1-\frac{q}{2}}}\sum_{i,k,l}2c_{ikl}(Z_kf_s)(X_iZ_lf_s)
- \frac{q(q-1)\sum_{i,k}
\vert X_iZ_kf_s \vert ^2 }{\Gamma(f_s)^{1-\frac{q}{2}}}+C_1 \Gamma( f_s)^{\frac{q}{2}} \right)
\end{align*}
where $C_1=q \left(\frac{1}{2}\max_{k} \sum_{i,l,m}\vert c_{kil}c_{lim}\vert+\frac{1}{2}\max_{m} \sum_{i,l,k}\vert c_{kil}c_{lim}\vert-\beta \lambda_*\right) $.  
Using the inequality
\begin{equation*}
 \frac{q}{\Gamma(f_s)^{1-\frac{q}{2}}}\sum_{i,k,l}2c_{ikl}(Z_kf_s)(X_iZ_lf_s) \le C_2 \Gamma( f_s)^{\frac{q}{2}} 
+ \frac{q(q-1)}{\Gamma(f_s)^{1-\frac{q}{2}}}\sum_{i,k}
\vert X_iZ_kf_s \vert ^2.
\end{equation*}
with $C_2= \frac{q}{q-1} \left(\max_{k}\sum_{i,l}c^2_{ikl}\right)$,
we get
$$
\frac{d}{ds} P_{t-s}\Gamma( f_s)^{\frac{q}{2}}   \le - \kappa' P_{t-s}\Gamma( f_s)^{\frac{q}{2}} 
$$
with $\kappa'=-C_1-C_2$ which is
strictly positive for 
$$\beta>\frac{1}{\lambda_*}\left(\frac{1}{2}\max_{k} \sum_{i,l,m}\vert c_{kil}c_{lim}\vert+\frac{1}{2}\max_{m} \sum_{i,l,k}\vert c_{kil}c_{lim}\vert+\frac{1}{q-1} \left(\max_{k}\sum_{i,l}c^2_{ikl}\right) \right).
$$ 
Integration of the above differential inequality ends the proof.
\end{proof} 

Summarising, the key idea of our estimates is contained in the assumption of
completeness of the set of fields
$\{Z_k\}$ in the sense that their commutators with the fields appearing in  the
generator do not give essentially new fields. In case of free nilpotent Lie
groups (\cite{G-G}) they can be chosen simply by taking all the fields generated by
the fields defining the generator. In some cases (as for example groups of rank
2, when the fields have linear coefficients) our procedure will work with the
usual square of the gradient form. Then our method applied to the complete gradient
$\{Z_k\}$ provides some other useful information on monotonicity of derivatives.

\section{Extension to infinite dimensions} \label{sec.3}
Let $\mathbf{\Lambda}\Subset \mathbb{Z}^d$ be a finite subset
of the $d$-dimensional lattice. For each $k \in \mathbb{Z}^d$, we consider isomorphic copies of the vector fields 
$Z_1,\dots,Z_n$ denoted by $Z_{k,1},\dots,Z_{k,n}$
(and similarly the isomorphic copies of $X_i=Z_i$, $i=1,\dots,m)$. As
in the finite dimensional case, let 
\begin{align*} 
\mathcal{L}_k^{(1)}&= \sum_{i=1}^m X_{k,i}^2-\beta
D_k+  
\sum_{i,i'=1}^mG_{ii'}X_{k,i}X_{k,i'}, \\  
\mathcal{L}_k^{(2)}&=\sum_{i=1}^m \alpha_{k,i} X_{k,i},
\end{align*}
where \(G=(G_{ij})\) is a constant matrix satisfying \eqref{Condition_onG} and  
\(\alpha_{k,i}=\alpha_{k,i}(\omega), \; \omega \in ({\mathbb{R}^N})^{\mathbb{Z}^d}. \) 
We  will work under the additional assumption that the range of interaction is
finite. In other words  $\alpha_{k,i} $ will depend only on coordinates around
\(k,  \) i.e.    \(Z_{k,i}\alpha_{j,r}=0\) whenever 
\( |k-j| > R\).
Here, the distance  of two points on the lattice is defined as
\( |k-j| = \sum_{l=1}^d\vert k_l-j_l \vert \), where \(k=(k_1,\dots
k_l)\) and \(j=(j_1,\dots j_l)\). For 
 \(k \in \mathbb{Z}^d,\) \(\Lambda \subset \mathbb{Z}^d\), we set
\(dist (k,\Lambda) \equiv \inf\{ |k - l|): l \in \Lambda\}\) . 
In addition, we will assume that the quantities \(\Vert
Z_{k,r}\alpha_{j,i}\Vert_\infty\) are uniformly bounded in
\(k,j \in {\mathbb{Z}^d}. \)   
By \(\Lambda(f)\) we will denote a localisation set for a function \(f\),
meaning that \(f\) depends only on coordinates indexed by points in
\(\Lambda(f)\). 
\par We consider a Markov
semigroup $P_t^\mathbf{\Lambda}$, defined via its generator
\[\mathcal{L}_\mathbf{\Lambda}=\sum_{k \in \mathbb{Z}^d }
\mathcal{L}_k^{(1)} +
\sum_{k \in\mathbf{\Lambda}} \mathcal{L}_k ^{(2)} 
\] 
and we define 
$\Gamma_k= \sum_{r=1}^n \vert Z_{k,r}f \vert ^2, 
\Gamma_{\mathbf{\Lambda}}=\sum_{k \in {\mathbf{\Lambda}}}
\Gamma_k$ and 
\(\Gamma=\sum_{k \in \mathbb{Z}^d} \Gamma_k\). 
This definition is motivated by the fact that the generators 
\(\mathcal{L}_{\mathbf{\Lambda}}\) approximate, as \({\mathbf{\Lambda}}
\uparrow \mathbb{Z}^d,\) the infinite dimensional generator 
\[ \mathcal{L} =\sum_{k \in \mathbb{Z}^d} \left( \mathcal{L}_k^{(1)}+
\mathcal{L}_k^{(2)}\right)
\equiv \sum_{k \in\mathbb{Z}^d} \mathcal{L}_k
.\] 
This construction, the details of which  are presented below, allows us to approximate 
the infinite dimensional semigroup \((e^{t \mathcal{L}})_{t \ge 0}\) by 
Markov semigroups, which are easier to study.    

\subsection{Strong Approximation Property} \label{subsec.3.1}

Given a finite set $\mathbf{\Lambda}\subset\mathbb{Z}^d$, for a cylinder
function $f$ such that 
$\Lambda(f)\subset\mathbf{\Lambda}$  we introduce 
$f_s=P_s^{\mathbf{\Lambda}}f$ 
and start similarly as before  by
considering 
\begin{align*} 
\partial_s P_{t-s}^{\mathbf{\Lambda}}\Gamma _k (f_s)&=
P_{t-s}^{\mathbf{\Lambda}} \left(
-\mathcal{L}_{\mathbf{\Lambda}}\Gamma_k(f_s)
+2\Gamma_k(f_s,\mathcal{L}_{\mathbf{\Lambda}} f_s)\right)\\ &
=P_{t-s}^{\mathbf{\Lambda}} \sum_{r=1}^n
\left(-\mathcal{L}_{{\mathbf{\Lambda}}} \vert Z_{k,r}(f_s)\vert^2
+2(Z_{k,r}f_s)\mathcal{L}_{\mathbf{\Lambda}} Z_{k,r}f_s 
+2(Z_{k,r}f_s)[Z_{k,r},\mathcal{L}_{\mathbf{\Lambda}}]f_s \right)
\\ & = P_{t-s}^{\mathbf{\Lambda}} \sum_{r=1}^n \left(   
-2{\bar\Gamma}(Z_{k,r}f_s) +
2(Z_{k,r}f_s)[Z_{k,r},\mathcal{L}_{\mathbf{\Lambda}}]f_s\right)
\end{align*}
where \(\bar{\Gamma}(f)= \sum_{j \in \mathbb{Z}^d} \left( \sum_{i,i'=1}^m (G_{ii'}+\delta_{ii'})(X_{j,i}f)(X_{j,i'}f) \right)\). Next, we notice that   
\begin{align*} 
[Z_{k,r},\mathcal{L}_{{\mathbf{\Lambda}}}]&=\sum_{j \in \mathbb{Z}^d}
\sum_{i=1}^m\left[Z_{k,r}, X_{j,i}^2-\beta D_j\right] + \sum_{j \in
\mathbb{Z}^d} \sum_{i,i'=1}^m\left[Z_{k,r}, G_{ii'}X_{j,i}X_{j,i'}\right]\\
&\quad+\sum_{j \in 
{\mathbf{\Lambda}}}\sum_{i=1}^m[Z_{k,r},\alpha_{j,i}X_{j,i}]\\ &
=
\sum_{i=1}^m \left([Z_{k,r},X_{k,i}^2-\beta D_k]+
\alpha_{k,i} [Z_{k,r}X_{k,i}] \right) \\ &\quad + \sum_{i,i'=1}^m[Z_{k,r},G_{ii'}X_{k,i}X_{k,i'}]+ \sum_{j \in
{\mathbf{\Lambda}}} \sum_{i=1}^m
(Z_{k,r} \alpha_{j,i}) X_{j,i},
\end{align*}
because \(Z_{k,r}\) and \(X_{j,i}\) commute when \(j \neq k\) for all \(r,i\). 
Combining the above, we arrive at 
\begin{align*}
\partial_s & P_{t-s}^{\mathbf{\Lambda}}\Gamma _k (f_s)= 
2P_{t-s}^{\mathbf{\Lambda}}   \sum_{r=1}^n \left( - 
{\bar\Gamma}(Z_{k,r}f_s) 
+ \sum_{i=1}^m (Z_{k,r}f_s)[Z_{k,r},X_{k,i}^2-\beta D_k]f_{s} \right. \\
& \phantom{AAAAAAAA}\left. + \sum_{i,i'=1}^m(Z_{k,r}f_s)[Z_{k,r},G_{ii'}X_{k,i}X_{k,i'}]f_{s}+\sum_{i=1}^m  \alpha_{k,i}(Z_{k,r}f_s) [Z_{k,r}X_{k,i}]f_{s} \right) \\ 
& \phantom{AAAAAAAA} +2P_{t-s}^{\mathbf{\Lambda}} \sum_{j \in {\mathbf{\Lambda}}}
\sum_{i=1}^m \sum_{r=1}^n  (Z_{k,r}f_s)(Z_{k,r} \alpha_{j,i})
(X_{j,i}f_s) \\ 
  &\; \phantom{QQAAQ}\le 2(-\beta\lambda_* + \tilde C) P_{t-s}^{\mathbf{\Lambda}}  \Gamma _k (f_s) 
+ 2P_{t-s}^{\mathbf{\Lambda}} \sum_{j \in
{\mathbf{\Lambda}}} \sum_{i=1}^m \sum_{r=1}^n 
(Z_{k,r}f_s)(Z_{k,r} \alpha_{j,i}) (X_{j,i}f_s),
\end{align*}
with some constant $\tilde C$ dependent only on the structure constants $c_{kjl}$, $G_{ij}$ and $||\alpha_{k,i} ||$.
For the last sum, we use Young's inequality to get 
\begin{align*}
2\sum_{j \in
{\mathbf{\Lambda}}}\sum_{i=1}^m&\sum_{r=1}^n(Z_{k,r}f_s)(Z_{k,r}
\alpha_{j,i})(X_{j,i}f_s)
\le\\&\sum_{i=1}^m \sum_{r=1}^n\vert Z_{k,r}\alpha_{k,i}\vert \left( \vert Z_{k,r}f_s \vert ^2
+\vert X_{k,i}f_s\vert^2 \right)\\&+\sum_{j \in
{\mathbf{\Lambda}},j\ne k}\sum_{i=1}^m \sum_{r=1}^n \vert
Z_{k,r}\alpha_{j,i}\vert
\left( \vert Z_{k,r}f_s\vert ^2 +\vert X_{j,i}f_s \vert ^2 \right) \\  \le 
\;&A_k\Gamma_{k}(f_s) 
+\sum_{j \in
{\mathbf{\Lambda}},j\ne k} 
M_{k,j}
\Gamma_j(f_s)
\end{align*}
with 
\begin{align*}\label{infdimconstants}\nonumber 
A_{k}&= \max _{r=1,\dots,n} \sum_{i=1}^m \Vert
Z_{k,r}\alpha_{k,i}\Vert_{\infty} +  
\max _{i=1,\dots,m} \sum_{r=1}^n \Vert Z_{k,r}\alpha_{k,i}\Vert_{\infty}
\\&\quad+
\sum_{j\in \mathbf{\Lambda},j \neq k}\max _{r=1,\dots,n} \sum_{i=1}^m \Vert Z_{k,r}\alpha_{j,i}\Vert_{\infty} ,  \\
M_{k,j} &=  \max _{i=1,\dots,m} \left(\sum_{r=1}^n \Vert Z_{k,r}\alpha_{j,i} \Vert_{\infty} \right),
\end{align*} 
which are finite quantities by our assumptions. 
We therefore arrive at 
\begin{equation}\label{infdimgradientbd}
\partial_s
P_{t-s}^{\mathbf{\Lambda}}\Gamma_k (f_s) \le
- \bar \kappa P_{t-s}^{\mathbf{\Lambda}}  \Gamma _k (f_s)
+ 
\sum_{j \in {\mathbf{\Lambda}}, j \ne
k}M_{k,j} P_{t-s}^{\mathbf{\Lambda}} \Gamma_j(f_s)
\end{equation}
with
\[
 C\equiv \tilde C+ \sup_{k \in \mathbb{Z}^d} A_k \qquad \text{ and } \qquad \bar
\kappa\equiv 2(\beta\lambda_* - C). 
\]
\endproof
Solving this differential inequality, we obtain the following bound:
\begin{lemma}\label{lem2.1}
There exists constants $\bar\kappa\in \mathbb{R}$ and $M_{k,j}\in(0,\infty), \,
M_{k,j}\equiv 0$ for $|j-k|> R$, such that 
for any $\mathbf{\Lambda}\subset\mathbb{Z}^d$ and any smooth cylinder function $f$ with $\Lambda(f)\subset\mathbf{\Lambda}$, we have
\begin{equation}
\Gamma_k (P_{t}^{\mathbf{\Lambda}} f) \leq e^{-\bar\kappa t}P_{t}^{\mathbf{\Lambda}}\Gamma _k (f)
+ \sum_{j \in {\mathbf{\Lambda}}, j \ne k}M_{k,j} \int_0^t ds\, e^{-\bar\kappa (t-s)}\, P_{t-s}^{\mathbf{\Lambda}} \Gamma_j(P_{s}^{\mathbf{\Lambda}} f).
\end{equation}
\end{lemma}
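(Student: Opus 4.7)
The plan is essentially to integrate the pointwise differential inequality \eqref{infdimgradientbd} that has just been established in the computation preceding the lemma. The right viewpoint is to regard the map
\[
s \mapsto F_k(s) := P_{t-s}^{\mathbf{\Lambda}}\,\Gamma_k(f_s), \qquad f_s := P_s^{\mathbf{\Lambda}} f,
\]
as a scalar-valued (or rather function-valued, but treated pointwise) function on $[0,t]$, with boundary values $F_k(0) = P_t^{\mathbf{\Lambda}}\Gamma_k(f)$ and $F_k(t) = \Gamma_k(P_t^{\mathbf{\Lambda}} f)$. In this notation \eqref{infdimgradientbd} becomes the linear differential inequality
\[
F_k'(s) \;\le\; -\bar\kappa\, F_k(s) \;+\; \sum_{j\in\mathbf{\Lambda},\,j\ne k} M_{k,j}\, P_{t-s}^{\mathbf{\Lambda}}\,\Gamma_j(f_s),
\]
where the inhomogeneous term is non-negative.

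The second step is the standard integrating factor trick: multiplying by $e^{\bar\kappa s}$ gives
\[
\frac{d}{ds}\bigl(e^{\bar\kappa s} F_k(s)\bigr) \;\le\; e^{\bar\kappa s}\sum_{j\ne k} M_{k,j}\,P_{t-s}^{\mathbf{\Lambda}}\,\Gamma_j(f_s),
\]
and integrating in $s$ from $0$ to $t$ yields exactly
\[
\Gamma_k(P_t^{\mathbf{\Lambda}} f) \;\le\; e^{-\bar\kappa t} P_t^{\mathbf{\Lambda}}\Gamma_k(f) + \sum_{j\ne k} M_{k,j} \int_0^t e^{-\bar\kappa(t-s)}\, P_{t-s}^{\mathbf{\Lambda}}\,\Gamma_j(P_s^{\mathbf{\Lambda}} f)\,ds,
\]
which is the claimed bound. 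The assertion that $M_{k,j}\equiv 0$ when $|k-j|>R$ is immediate from the finite-range hypothesis $Z_{k,r}\alpha_{j,i}=0$ for $|k-j|>R$, which annihilates every term contributing to $M_{k,j}$ in that range.

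Since the hard analytic work (commutator expansion, Young's inequality application, identification of the constants $\bar\kappa$ and $M_{k,j}$) has already been carried out in producing the pointwise inequality \eqref{infdimgradientbd}, there is no substantive obstacle at this stage; the main thing to verify carefully is that everything is applied to a fixed cylinder function $f$ with $\Lambda(f)\subset\mathbf{\Lambda}$ so that $P_t^{\mathbf{\Lambda}}$ is well-defined (finitely many generators acting non-trivially), and that the pointwise integration commutes with the Markov semigroup's positivity—both of which are automatic here. Thus the lemma is really a routine Duhamel/Gronwall corollary of the differential estimate already derived.
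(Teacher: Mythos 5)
Your proposal is correct and follows essentially the same route as the paper: the paper derives the pointwise differential inequality \eqref{infdimgradientbd} and then simply states that ``solving this differential inequality'' yields the lemma, which is precisely the integrating-factor/Duhamel step you spell out, with the boundary values $F_k(0)=P_t^{\mathbf{\Lambda}}\Gamma_k(f)$ and $F_k(t)=\Gamma_k(P_t^{\mathbf{\Lambda}}f)$. The observation that $M_{k,j}\equiv 0$ for $|k-j|>R$ is likewise immediate from the finite-range assumption $Z_{k,r}\alpha_{j,i}=0$ for $|k-j|>R$, exactly as you note.
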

\begin{rem}  One can use this lemma to get gradient bounds in 
$l_q, \, q\geq 1$ norms for vectors $\Gamma_k,\, k\in\mathbb{Z}^d$.\end{rem}
\begin{rem} For a matrix $ \hat{G}=((\hat{G}_{ii'}^{kk'})_{i,i'=1}^m)_{k,k' \in \mathbb{Z}^d}$ satisfying $\hat{G} ^* +I  \ge 0$ and $ \sum_{k, k' \in \mathbb{Z}^d} \sum_{i,i'=1}^m\vert \hat G_{ii'}^{kk'}\vert < \infty$,  it is possible to repeat the above argument for the generator given by\[  \hat{\mathcal{L}}_\mathbf{\Lambda}=\mathcal{L}_\mathbf{\Lambda}+\sum_{k,k' \in \mathbb{Z}^d} \sum_{i,i'=1}^m \hat{G}_{ii'}^{kk'}X_{k,i}X_{k',i'}.\] 
\end{rem}
\begin{prop}[Finite speed of propagation of
information]\label{finitespeedofpropagation} \label{prop2.2}
Let \(f \) be a smooth function and assume 
$\Lambda(f) \subset{\mathbf{\Lambda}} \Subset \mathbb{Z}^d$. For
$k \notin \Lambda(f)$,  we have
\begin{align*}
\Vert \Gamma_k(P_t^{\mathbf{\Lambda}} f)\Vert_\infty 
&\le e^{N_k \left( \log C-\log N_k  + 2 + \log t\right)+Ct}\sum_{j \in
\mathbb{Z}^d}\Vert \Gamma_jf \Vert_\infty,
\end{align*}
where \(N_k=\left[ \frac{dist(k,\Lambda(f))}{R} \right]\) and \(C>0\) is a
constant. 
Hence, for any \(\sigma>0\) 
there exists \(\tau>1\) such that if \(N_k\ge \tau t\)
$$ \Vert \Gamma_k(P_t^{\mathbf{\Lambda}} f)\Vert_\infty \le
e^{-\sigma t-\sigma N_k}\sum_{j \in \mathbb{Z}^d}\Vert \Gamma_jf
\Vert_\infty .$$
\end{prop}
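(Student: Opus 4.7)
The plan is to iterate the scalar gradient inequality of Lemma~\ref{lem2.1}. Taking $\|\cdot\|_\infty$ of both sides and using the Markov contraction $\|P_t^{\mathbf{\Lambda}}\varphi\|_\infty\le\|\varphi\|_\infty$, the quantity $a_k(t):=\|\Gamma_k(P_t^{\mathbf{\Lambda}}f)\|_\infty$ satisfies
\[
a_k(t)\le e^{-\bar\kappa t}\|\Gamma_k(f)\|_\infty+\sum_{j\ne k}M_{k,j}\int_0^t e^{-\bar\kappa(t-s)}a_j(s)\,ds.
\]
Setting $u_k(t):=e^{\bar\kappa t}a_k(t)$ strips the exponential and puts the bound in pure Volterra form
\[
u_k(t)\le \|\Gamma_k(f)\|_\infty+\sum_{j\ne k}M_{k,j}\int_0^t u_j(s)\,ds,
\]
which invites a Picard--Neumann iteration.

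Iterating $n$ times produces a sum indexed by lattice chains $k=j_0,j_1,\dots,j_\ell$ of length $\ell\in\{0,\dots,n-1\}$ plus a remainder $R_n(t)$ that is an $n$-fold iterated integral of $u_{j_n}(s)$. A one-line Gronwall bound applied to $v(t):=\sup_j u_j(t)$, using $C_0:=\sup_k\sum_j M_{k,j}<\infty$, yields $v(t)\le\|\Gamma(f)\|_{\ell^1}e^{C_0 t}$ with $\|\Gamma(f)\|_{\ell^1}:=\sum_j\|\Gamma_j(f)\|_\infty$, hence $R_n(t)\le \|\Gamma(f)\|_{\ell^1}e^{C_0 t}(C_0 t)^n/n!\to 0$. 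Two geometric facts now eliminate every short chain: by the finite-range hypothesis, $M_{k,j}=0$ whenever $|k-j|>R$, so each chain in the series advances at most $R$ per step; and $\|\Gamma_{j_\ell}(f)\|_\infty=0$ unless $j_\ell\in\Lambda(f)$. Since $k\notin\Lambda(f)$ and $\mathrm{dist}(k,\Lambda(f))>R(N_k-1)$, no chain with $\ell<N_k$ can reach $\Lambda(f)$, so its contribution vanishes. Bounding each surviving $\|\Gamma_{j_\ell}(f)\|_\infty$ by $\|\Gamma(f)\|_{\ell^1}$ and collapsing each chain sum inductively via $\sum_j M_{k,j}\le C_0$ yields
\[
u_k(t)\le\|\Gamma(f)\|_{\ell^1}\sum_{\ell\ge N_k}\frac{(C_0 t)^\ell}{\ell!}.
\]

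For the tail of the exponential series, the elementary inequality $\ell!\ge N_k!\,(\ell-N_k)!$ gives $\sum_{\ell\ge N_k}(C_0 t)^\ell/\ell!\le(C_0 t)^{N_k}e^{C_0 t}/N_k!$, and Stirling's lower bound $N_k!\ge(N_k/e)^{N_k}$ turns $(C_0 t)^{N_k}/N_k!$ into $\exp\!\bigl(N_k(\log C_0+1-\log N_k+\log t)\bigr)$. Rolling $e^{-\bar\kappa t}$ (from the $u_k\mapsto a_k$ conversion) and $e^{C_0 t}$ into a single $e^{Ct}$ and absorbing $\log C_0$ and the leading ``$+1$'' into the constant $C$ produces precisely the stated bound. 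For the second assertion, substituting $N_k\ge\tau t$ gives $\log t-\log N_k\le-\log\tau$ and $Ct\le CN_k/\tau$; writing $Ct=-\sigma t+(C+\sigma)t$ with $(C+\sigma)t\le(C+\sigma)N_k/\tau$ shows that the exponent is at most $N_k(\log C+2-\log\tau+(C+\sigma)/\tau)-\sigma t$, which is $\le-\sigma N_k-\sigma t$ once $\tau$ is chosen large enough.

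The main obstacle I expect is the book-keeping in the iteration step: rigorously justifying $R_n(t)\to 0$ requires a uniform-in-$n$ bound on $u_{j_n}(s)$, which is not entirely automatic because the support of $P_s^{\mathbf{\Lambda}}f$ spreads in $s$; and extracting $\sum_j\|\Gamma_j(f)\|_\infty$ on the right rather than a localised or supremum version, while keeping the combinatorial explosion of admissible chains under control --- here the row-sum bound $\sum_j M_{k,j}\le C_0$ is the crucial device that conflates the number of chains with the product of coefficients into the single factor $C_0^\ell$ needed for the Stirling estimate to bite.
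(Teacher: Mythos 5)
Your proposal is correct and follows essentially the same route as the paper: iterate the Volterra-type inequality from Lemma~\ref{lem2.1}, use the finite range $M_{k,j}=0$ for $|k-j|>R$ together with $\Gamma_j f=0$ off $\Lambda(f)$ to kill all chains of length below $N_k$, bound the surviving tail by $(Ct)^{N_k}e^{Ct}/N_k!$, apply a Stirling-type lower bound on $N_k!$, and choose $\tau$ so that $\log(C/\tau)+2+C/\tau\le-2\sigma$. Your treatment is if anything slightly more careful than the paper's (explicit remainder control via Gronwall, and the $e^{\bar\kappa t}$ substitution avoids assuming $\bar\kappa\ge 0$), but it is the same argument.
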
\proof
We argue similarly as in \cite{G-Z} (see also references given there). 
From Lemma \ref{lem2.1}, we have
\begin{align*}
||\Gamma_k (P_{t}^{\mathbf{\Lambda}} f)||_\infty \leq e^{-\bar\kappa t}||\Gamma _k (f)||_\infty
+ \sum_{j \in {\mathbf{\Lambda}}, j \ne k}M_{k,j} \int_0^t ds\, e^{-\bar\kappa (t-s)}\, || \Gamma_j(P_{s}^{\mathbf{\Lambda}} f)||_\infty
\end{align*}
 with  
\( M_{kj}\equiv 0,\, |j-k|\geq R 
\) 
and \(\kappa\in(0,\infty)\).  This implies 
\begin{align*} 
\Vert \Gamma_k(P_{t}f) \Vert_\infty &\le \Vert\Gamma_kf
\Vert_\infty +\sum_{j \in {\mathbf{\Lambda}}} M_{kj} \int _0^t 
\Vert \Gamma_j(f_{s}) \Vert_\infty ds\\&=\sum_{j \in
{\mathbf{\Lambda}}} M_{kj} \int _0^t \Vert \Gamma_j(f_{s})
\Vert_\infty ds, 
\end{align*}
since \(k \notin \Lambda(f)\). We may iterate the above to get \[ \Vert \Gamma_k(P_tf)\Vert_\infty 
\le C_{}^{N_{k}}\frac{t^{N_k}}{N_k!}e^{Ct}\sum_{j \in \mathbb{Z}^d} \Vert \Gamma_jf \Vert_\infty,
\]
with some constant \(C>0\) and  \(N_k=\left[ \frac{dist(k,\Lambda(f))}{R} \right]\).   Since 
\(N_{k}! > e^{N_k\log N_{k}-2N_{k}}, \) 
\begin{align*}
\Vert \Gamma_k(P_tf)\Vert_\infty &\le e^{N_k \left( \log C-\log N_k  +2+\log t\right)+Ct}\sum_{j \in \mathbb{Z}^d}
\Vert \Gamma_jf \Vert_\infty. 
\end{align*}
Now, given \(\sigma>0,\) we may choose \(\tau \geq 1\) large enough  so that 
$ \log\frac{C}{\tau}+2+\frac{C}{\tau}\le -2\sigma $. 
If \(N_k \ge \tau t,\)  we then get  
\begin{align*}
N_k \left( \log C-\log N_k  +2+\log t\right)+Ct
& \le N_k \left(\log\frac{C}{\tau} +2+\frac{C}{\tau}\right) \\ 
& \le N_k (-2\sigma) \le -\sigma N_k -\sigma t,
\end{align*}
as required.
\endproof
\begin{teo}\label{lamlimit} \label{thm2.3}
For any \(t>0\)  and any continuous function 
\(f\) 
the following limit exists in the uniform
norm
\[\lim_{{\mathbf{\Lambda}} \uparrow \mathbb{Z}^d}
P_t^{{\mathbf{\Lambda}}}f=:P_tf\ \]
and defines a Markov semigroup.
\end{teo}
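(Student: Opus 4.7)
The strategy is to exhibit $\{P_t^{\Lambda} f\}$ as a Cauchy net in the uniform norm for $f$ a smooth cylinder function, using the finite speed of propagation bound of Proposition~\ref{prop2.2}, and then to extend to continuous $f$ by density and contractivity. For finite $\Lambda \subset \Lambda'$ both containing $\Lambda(f)$, the standard Duhamel identity
\begin{equation*}
P_t^{\Lambda'} f - P_t^{\Lambda} f = \int_0^t P_s^{\Lambda'} (\mathcal{L}_{\Lambda'} - \mathcal{L}_{\Lambda}) P_{t-s}^{\Lambda} f \, ds
\end{equation*}
isolates the difference of generators $\mathcal{L}_{\Lambda'} - \mathcal{L}_{\Lambda} = \sum_{k \in \Lambda' \setminus \Lambda} \sum_{i=1}^m \alpha_{k,i} X_{k,i}$, which is a first-order operator supported away from $\Lambda$. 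Invoking contractivity of $P_s^{\Lambda'}$ on $L^\infty$, the pointwise bound $|X_{k,i} g| \le \sqrt{\Gamma_k(g)}$, and uniform bounds on $\|\alpha_{k,i}\|_\infty$, I would obtain
\begin{equation*}
\|P_t^{\Lambda'} f - P_t^{\Lambda} f\|_\infty \le C \int_0^t \sum_{k \in \Lambda' \setminus \Lambda} \sqrt{\|\Gamma_k(P_{t-s}^{\Lambda} f)\|_\infty} \, ds.
\end{equation*}

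Given any $\sigma > 0$, I would then fix $\tau > 1$ as provided by Proposition~\ref{prop2.2} and enlarge $\Lambda \supset \Lambda_0$, where $\Lambda_0$ is chosen to contain the lattice ball of radius $R\tau t$ about $\Lambda(f)$. Every $k \in \Lambda' \setminus \Lambda$ then satisfies $N_k \ge \tau t \ge \tau(t-s)$ for all $s \in [0,t]$, so
\begin{equation*}
\|\Gamma_k(P_{t-s}^{\Lambda} f)\|_\infty \le e^{-\sigma(t-s) - \sigma N_k} \sum_{j \in \mathbb{Z}^d} \|\Gamma_j f\|_\infty.
\end{equation*}
Because lattice spheres grow only polynomially with distance, the tail $\sum_{k : N_k > \tau t} e^{-\sigma N_k/2}$ converges and tends to zero as $\Lambda_0 \uparrow \mathbb{Z}^d$, which establishes the Cauchy property and hence existence of $P_t f := \lim_\Lambda P_t^{\Lambda} f$ in uniform norm for every smooth cylinder $f$. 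Extension to continuous $f$ follows from density of smooth cylinder functions in the relevant subalgebra of $C_b$ together with the uniform contractivity $\|P_t^{\Lambda}\|_{\infty \to \infty} \le 1$.

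The Markov properties (positivity, $P_t 1 = 1$, contractivity) inherit from their validity at finite volume, while the semigroup identity $P_{t+s} = P_t P_s$ is obtained from the splitting
\begin{equation*}
P_t^{\Lambda} P_s^{\Lambda} f - P_t P_s f = P_t^{\Lambda}(P_s^{\Lambda} f - P_s f) + (P_t^{\Lambda} - P_t) P_s f,
\end{equation*}
whose two terms vanish in the limit by $L^\infty$-contractivity combined with the Cauchy argument and by the same convergence applied to the continuous function $P_s f$, respectively. The main obstacle is the balance struck in the Cauchy step: the parameter $\tau$ grows with the demanded decay rate $\sigma$, so one must take $\Lambda_0$ large enough to swallow the entire ``light cone'' of radius $R\tau t$ before the finite-propagation estimate becomes effective. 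Once this is arranged, the super-exponential factor $C^{N_k}/N_k!$ delivered by iterating Lemma~\ref{lem2.1} easily beats the polynomial lattice volume growth, and the remaining work is essentially bookkeeping.
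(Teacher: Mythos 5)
Your proposal is correct and follows essentially the same route as the paper: a Duhamel comparison of $P_t^{\Lambda}$ and $P_t^{\Lambda'}$ reducing the difference to the first-order terms $\sum_i\alpha_{k,i}X_{k,i}$ at sites $k$ outside $\Lambda$, bounded via $|X_{k,i}g|\le\sqrt{\Gamma_k(g)}$ and the finite speed of propagation estimate of Proposition~\ref{prop2.2} (the paper merely organises the Duhamel step as a telescoping sum over single-site increments $\Lambda^{(n+1)}\setminus\Lambda^{(n)}=\{j_n\}$, which is equivalent). If anything, your explicit summation of the tail $\sum_{k}e^{-\sigma N_k}$ against polynomial lattice growth, and your spelling out of the density/contractivity extension and the semigroup identity, are points the paper leaves implicit.
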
 
\proof 
It is sufficient to prove the existence of the limit for smooth cylinder
functions.
To this end pick \({\mathbf{\Lambda}}_1,{\mathbf{\Lambda}}_2\Subset
\mathbb{Z}^d\) such that \(\Lambda(f)\subset
{\mathbf{\Lambda}}_1\subset{\mathbf{\Lambda}}_2\) and choose a
sequence 
\({\mathbf{\Lambda}}^{(n)}\) such that
\({\mathbf{\Lambda}}^{(0)}={\mathbf{\Lambda}}_1\),
\({\mathbf{\Lambda}}^{(\mathcal{N})}={\mathbf{\Lambda}}_2\) 
and     
\({\mathbf{\Lambda}}^{(n+1)}\setminus
{\mathbf{\Lambda}}^{(n)}=\{j_n\}\) is a singleton for any
\(n=0,\dots,\mathcal{N}-1\). 
Using the Fundamental Theorem of Calculus and the fact that
\(P_{t}^{{\mathbf{\Lambda}}^{(n)}}\) is contractive, we have 
\begin{align*}
\left\Vert
P_t^{{\mathbf{\Lambda}}_2}f-P_t^{{\mathbf{\Lambda}}_1}
f\right\Vert_\infty 
&\le \sum_{n=0}^{\mathcal{N}-1} \left\Vert
P_t^{{\mathbf{\Lambda}}^{(n)}}f-P_t^{{\mathbf{\Lambda}}^{(n+1)}}
f\right\Vert_\infty\\ 
& \le \sum_{n=0}^{\mathcal{N}-1} \int_0^t \left\Vert
P_{t-s}^{{\mathbf{\Lambda}}^{(n)}} 
(\mathcal{L}_{{\mathbf{\Lambda}}^{(n)}}-\mathcal{L}_{{\mathbf{\Lambda}}
^{(n+1)}})P_s^{{\mathbf{\Lambda}}^{(n+1)}}f\right\Vert_\infty ds 
\\ 
& \le\sum_{n=0}^{\mathcal{N}-1} \int_0^t 
\left\Vert  
(\mathcal{L}_{{\mathbf{\Lambda}}^{(n)}}-\mathcal{L}_{{\mathbf{\Lambda}}
^{(n+1)}})P_s^{{\mathbf{\Lambda}}^{(n+1)}}f\right\Vert_\infty ds \\ 
& \le\sum_{n=0}^{\mathcal{N}-1} \int_0^t \left\Vert 
\sum_{i=1}^m
\alpha_{j_n,i}X_{j_n,i}P_s^{{\mathbf{\Lambda}}^{(n+1)}}
f\right\Vert_\infty ds
\\ 
&\le \sum_{n=0}^{\mathcal{N}-1} \int_0^t 
\left\Vert \sum_{i=1}^m \alpha_{j_n,i}^2 \right\Vert ^\frac{1}{2} _\infty 
\left\Vert \sum_{i=1}^m \left\vert X_{j_n,i}P_s
^{{\mathbf{\Lambda}}^{(n+1)} }f\right\vert^2
\right\Vert^\frac{1}{2}_\infty ds.
\\ & \le m \sum_{n=0}^{\mathcal{N}-1} 
\int_0^t \Vert \alpha_{j_n,i}\Vert_\infty
\sqrt{\Vert\Gamma_{j_n}(P_s ^{{\mathbf{\Lambda}}^{(n+1)} }f)
\Vert_\infty} ds
\end{align*}
Let \(\sigma>0\). Since \(j_n \notin {\mathbf{\Lambda}}_1\),
we can apply Lemma \ref{finitespeedofpropagation} to conclude
that  
\begin{align}\nonumber
\left\Vert
P_t^{{\mathbf{\Lambda}}_2}f-P_t^{{\mathbf{\Lambda}}_1}
f\right\Vert_\infty & \le m
\sum_{n=0}^{\mathcal{N}-1}\Vert \alpha_{j_n,i}\Vert_\infty \int_0^t e^{-\sigma
s-\sigma N_{j_n}} \sqrt{ \sum_{j \in \mathbb{Z}^d}\Vert \Gamma_j
f\Vert_{\infty}}ds \\ \label{expdecay}& \le m\mathcal{N}e^{-\sigma\overline{N}\
} \frac{1-e^{-\sigma t}}{\sigma}\max_{n=1,\dots,\mathcal{N}-1}\Vert
\alpha_{j_n,i}\Vert_\infty \sqrt{ \sum_{j \in \mathbb{Z}^d}\Vert \Gamma_j
f\Vert_\infty}ds 
\end{align}
provided that  \(N_{j_n}= \left[ \frac{dist(j_{n},\Lambda(f))}{R}
\right] \ge \overline{N}\ge \tau t\) for some \(\tau>1\) large enough, where 
\(\overline{N} =\left[
\frac{dist({\mathbf{\Lambda}}_1,\Lambda(f))}{R}
\right]\). We have therefore established that  if
\(({\mathbf{\Lambda}}_n\Subset
\mathbb{Z}^d)_{n = 0}^\infty\) is
a sequence such that \({\mathbf{\Lambda}}_n
\uparrow \mathbb{Z}^d\) as \(n \rightarrow \infty, \) then
\((P_t^{{\mathbf{\Lambda}}_n}f)_{n=0}^\infty\) is  a Cauchy
sequence.
\endproof 

\subsection{Existence of a limit measure} \label{limitmeasureinfdim}
\label{subsec2.2} 
Let \(\zeta \in \mathbb{N}\) be such that 
\( \sum_{k \in \mathbb{Z}^d}(1+ |k|)^{-\zeta} < \infty\). 
For \(\mathscr{K} \in \mathbb{N}\), define sets \[\Omega_\mathscr{K}
=\left\{\omega\in ({\mathbb{R}^N})^{\mathbb{Z}^d}: 
\sum_{k \in \mathbb{Z}^d}(1+ |k|)^{-\zeta} d(\omega_k)<\mathscr{K}\right\}\]
and let 
$$\Omega:=\cup_{\mathscr{K} \in \mathbb{N}}(
\Omega_{\mathscr{K}})=\left\{\omega\in ({\mathbb{R}^N})^{\mathbb{Z}^d}: 
\sum_{k \in \mathbb{Z}^d}(1 + |k|)^{-\zeta}d(\omega_k)<\infty\right\}.
$$
For \(j \in \mathbb{Z}^d\) and \(\omega \in ({\mathbb{R}^N})^{\mathbb{Z}^d}\) we
consider the (semi-)distance \(d_j(\omega)\equiv d(\omega_j)\) (recall that we write \(d(x,0)=d(x)\) where \(d\) is a metric on $\mathbb{R}^N$). The
corresponding cut-off \(\rho_j\) then satisfies,  (similarly as in Lemma
\ref{Ptdbound}), 
\[P_t^{\mathbf{\Lambda}} \sum_{j \in
{\mathbf{\Lambda}}}\rho_j \le K_{\mathbf{\Lambda}}\] for some
constant \(K_{\mathbf{\Lambda}}>0\) and all \(t>0\).  
If we define
\(\Upsilon^{\mathbf{\Lambda}}_{L}=\{\sum_{j \in
{\mathbf{\Lambda}} } \rho_j\le L\}\), 
arguing as in
Section \ref{limitmeasurefinitedim}, we can extract a convergent subsequence 
\(P_{t_k}^{\mathbf{\Lambda}}\) such that for all bounded
continuous \(f\) and \(\omega \in
\Omega\) we have \(P_{t_k}^{\mathbf{\Lambda}} f(\omega)
\rightarrow \nu _{{\mathbf{\Lambda}}, \omega} (f).\)

\subsection{Ergodicity of the semigroup} 
   
%

By Section \ref{limitmeasureinfdim} and  Theorem \ref{lamlimit} we have  that  for
\(\omega \in \Omega\) there exists a measure \(\nu_\omega\) such that \[ P_{t_k}
^{\mathbf{\Lambda}} f(\omega) \rightarrow \nu_\omega(f) \] as
\(k \rightarrow \infty \) and
\({\mathbf{\Lambda}} \uparrow \mathbb{Z}^d\). Moreover, by
Markov's inequality, for
all~\(\omega \in \Omega\) 

\[\nu_{\omega} (\Omega_\mathscr{K}) \ge 1-\frac{1}{\mathscr{K}}
\sup_{k \in \mathbb{Z}^d}\left(\int 
d(x_k)\nu_{\omega}(dx)\right)\sum_{k \in \mathbb{Z}^d}(1+|k|)^{-\zeta}
\] 
and
thus \(\nu_{\omega}(\Omega)=1\). 
We will show the following result.
 
\begin{teo} \label{thm2.4}
There exists \(t_0>0\) such that for \(t>t_0 \),   bounded
smooth cylinder function \(f\)  and any \( \omega, \tilde{\omega} \in \Omega, \)
$$ \vert P_t
f(\omega)-P_tf(\tilde{\omega})\vert \le { \mathcal{C} }(f,\omega,
\tilde{\omega})e^{-\varpi t},$$ where \(\varpi>0\) is a constant and \(  {
\mathcal{C} }(f,\omega, \tilde{\omega}) \) depends only on \(f, \omega\) and
\(\tilde{\omega}\).  
\end{teo}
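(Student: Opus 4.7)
By Theorem \ref{lamlimit} it suffices to establish the bound uniformly in a finite volume $\mathbf{\Lambda} \Supset \Lambda(f)$ for $P_t^{\mathbf{\Lambda}} f$ and then pass to the limit. The plan is to interpolate from $\omega$ to $\tilde\omega$ one site at a time along horizontal (sub-Riemannian) curves and to control each single-site variation by the corresponding complete site-gradient $\Gamma_k(P_t^{\mathbf{\Lambda}} f)$. Enumerate $\mathbb Z^d = \{k_1,k_2,\dots\}$ (with $\Lambda(f)$-sites first) and define $\omega^{(n)}$ to agree with $\tilde\omega$ on $\{k_1,\dots,k_n\}$ and with $\omega$ elsewhere. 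Joining $\omega_{k_n}$ to $\tilde\omega_{k_n}$ by a horizontal curve of length close to $d(\omega_{k_n},\tilde\omega_{k_n})$, using $\sum_i |X_{k_n,i}(P_t^{\mathbf{\Lambda}} f)|^2 \le \Gamma_{k_n}(P_t^{\mathbf{\Lambda}} f)$ and the fundamental theorem of calculus, the telescoping gives
\[
|P_t^{\mathbf{\Lambda}} f(\omega) - P_t^{\mathbf{\Lambda}} f(\tilde\omega)| \;\le\; \sum_{k \in \mathbb Z^d} d(\omega_k,\tilde\omega_k)\,\bigl\|\sqrt{\Gamma_k(P_t^{\mathbf{\Lambda}} f)}\bigr\|_\infty.
\]

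\textbf{Splitting into near and far sites.} I fix $\tau>1$ and split the sum via $N_k=[\operatorname{dist}(k,\Lambda(f))/R]$ at the threshold $\tau t$. For far sites, Proposition \ref{finitespeedofpropagation} yields $\|\Gamma_k(P_t^{\mathbf{\Lambda}} f)\|_\infty \le e^{-\sigma t - \sigma N_k} \sum_j \|\Gamma_j f\|_\infty$, so the far contribution is dominated by
\[
e^{-\sigma t/2} \sqrt{\textstyle\sum_j \|\Gamma_j f\|_\infty}\; \sum_{k:\,N_k\ge\tau t} e^{-\sigma N_k/2} d(\omega_k,\tilde\omega_k).
\]
The remaining series is finite for $\omega,\tilde\omega\in\Omega$: using $d(\omega_k,\tilde\omega_k)\le d(\omega_k)+d(\tilde\omega_k)$ and the $\Omega$-summability $\sum_k(1+|k|)^{-\zeta}d(\omega_k)<\infty$, the exponential weight $e^{-\sigma N_k/2}$ dominates the polynomial factor $(1+|k|)^{\zeta}$ once $\sigma$ is chosen large enough.

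\textbf{Near sites via Lemma \ref{lem2.1}.} For near sites I iterate the coupled inequality in Lemma \ref{lem2.1}. Let $g_k(t)=\|\Gamma_k(P_t^{\mathbf{\Lambda}} f)\|_\infty$ and $h_k(t)=e^{\bar\kappa t} g_k(t)$; then $h_k(t)\le g_k(0)+\sum_{j\ne k} M_{k,j}\int_0^t h_j(s)\,ds$. Taking $H(t)=\sup_k h_k(t)$ and assuming $\beta$ large enough that $\bar\kappa$ strictly exceeds $M^*:=\sup_k\sum_{j\ne k} M_{k,j}$, Gronwall's lemma gives $H(t)\le G(0)e^{M^*t}$ with $G(0)=\max_k g_k(0)\le\sum_j\|\Gamma_j f\|_\infty$. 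Hence $g_k(t)\le G(0)\,e^{-(\bar\kappa-M^*)t}$ uniformly in $k$. Since there are $O((\tau t)^d)$ near sites and, by the $\Omega$-condition, $\sum_{k:N_k<\tau t}d(\omega_k,\tilde\omega_k)$ grows at most polynomially in $t$, the near contribution is bounded by a polynomial in $t$ times $e^{-(\bar\kappa-M^*)t/2}$, which is absorbed into $e^{-\varpi t}$ for $t\ge t_0$ large enough.

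\textbf{Passage to the limit and main obstacle.} Combining the near and far estimates yields $|P_t^{\mathbf{\Lambda}} f(\omega)-P_t^{\mathbf{\Lambda}} f(\tilde\omega)|\le \mathcal C(f,\omega,\tilde\omega)e^{-\varpi t}$ uniformly in $\mathbf{\Lambda}\Supset\Lambda(f)$, with $\varpi<\min\{\sigma/2,(\bar\kappa-M^*)/2\}$ (slightly reduced to absorb polynomial prefactors) and $\mathcal C$ depending polynomially on $\sum_j\|\Gamma_j f\|_\infty$ and on the weighted distance $\sum_k e^{-\sigma N_k/2}d(\omega_k,\tilde\omega_k)$. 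Letting $\mathbf{\Lambda}\uparrow\mathbb Z^d$ via Theorem \ref{lamlimit} preserves the bound. The main obstacle is the Gronwall step: extracting \emph{uniform} exponential decay of $\sup_k g_k(t)$ from the convolution inequality of Lemma \ref{lem2.1} requires the single-site rate $\bar\kappa$ to dominate the full row sum $M^*$, which is arranged by taking $\beta$ sufficiently large as in Theorem \ref{Result_Constant Coefficients}. A secondary subtlety is rigorously justifying the infinite telescoping: one must check continuity of $P_t^{\mathbf{\Lambda}} f$ in the product topology (immediate for cylinder $f$ and finite $\mathbf{\Lambda}$) and the existence of horizontal joining curves at each site (ensured by the H\"ormander condition on the $X_{k,i}$).
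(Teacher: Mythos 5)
Your proof is correct and rests on the same ingredients as the paper's --- the site-by-site interpolation along horizontal paths bounded by $\sum_k d(\omega_k,\tilde\omega_k)\,\Vert\Gamma_k(P_t^{\mathbf{\Lambda}}f)\Vert_\infty^{1/2}$, the exponential decay of the site-gradients produced by the $\beta D$ damping, the finite speed of propagation, and the weighted summability defining $\Omega$ --- but you deploy them in a genuinely different decomposition. The paper takes a \emph{time-dependent} volume $\mathbf{\Lambda}(t)$ with $\diam(\mathbf{\Lambda}(t))=\varkappa t$ and splits $\vert P_tf(\omega)-P_tf(\tilde\omega)\vert$ into two truncation errors $\vert P_tf-P_t^{\mathbf{\Lambda}(t)}f\vert$ (controlled via the finite-speed estimate inside the proof of Theorem \ref{lamlimit}, giving $e^{-\theta t/2}$) plus the finite-volume variation; the interpolation then runs over only $O(t^d)$ sites, so the $\Omega$-condition enters merely as the polynomial bound $\sum_{k\in\mathbf{\Lambda}^R}(d(\omega_k)+d(\tilde\omega_k))\le(C_\omega+C_{\tilde\omega})(1+\varkappa t)^\zeta$, which is absorbed into the exponential. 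You instead keep $\mathbf{\Lambda}$ arbitrary, interpolate over all sites, and split the sum at $N_k=\tau t$: Proposition \ref{finitespeedofpropagation} kills the far tail (where you need convergence of the exponentially weighted sum $\sum_k e^{-\sigma N_k/2}d(\omega_k)$, guaranteed by the $\Omega$-condition since $N_k$ grows linearly in $\vert k\vert$), while your Gronwall iteration of Lemma \ref{lem2.1} handles the near part. Your route buys a bound uniform in $\mathbf{\Lambda}$ from the outset, so the passage to $P_t$ is immediate, and it makes explicit --- via the condition $\bar\kappa>M^*=\sup_k\sum_{j\ne k}M_{k,j}$ --- the quantitative requirement on $\beta$ that the paper leaves somewhat implicit in the step $\Gamma_k(P_t^{\mathbf{\Lambda}}f)\le e^{-\varsigma t}P_t^{\mathbf{\Lambda}}(\Gamma_{\mathbf{\Lambda}}f)$; the paper's route avoids the weighted-distance bookkeeping at the cost of the three-term splitting and the coupling of $\mathbf{\Lambda}$ to $t$. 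Two small remarks: for finite $\mathbf{\Lambda}$ the function $P_t^{\mathbf{\Lambda}}f$ is itself a cylinder function localised in an $R$-neighbourhood of $\mathbf{\Lambda}$, so your telescoping sum is actually finite and the continuity issue you flag does not arise; and in the Gronwall step the supremum $H(t)$ is finite for the same reason, which is worth saying before applying the lemma.
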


\begin{proof} We choose \({\mathbf{\Lambda}}=
{\mathbf{\Lambda}}(t)\) such
that \(\diam ({\mathbf{\Lambda}}) =\varkappa t\) for some
\(\varkappa>0\) to be determined
later, and order the elements of  ${\mathbf{\Lambda}}$
lexicographically. 
For \(\omega,\tilde{\omega} \in \Omega\) we can choose a suitable sequence
\( (\omega^k)_{k \in \mathbb{Z}^d}\) that interpolates between \(\omega\) and
\(\tilde{\omega}\) and such that each element  differs from the previous one
only in single coordinate.  Moreover for all \({\mathbf{\Lambda}}
\Subset \mathbb{Z}^d\),
\begin{align*} 
\vert P_t f(\omega)-P_tf(\tilde{\omega})\vert \le& \vert P_t
f(\omega)-P^ {\mathbf{\Lambda}}_t f(\omega) \vert
+\vert P_t
^{{\mathbf{\Lambda}}}f(\omega)-P^{\mathbf{\Lambda}}_tf(\tilde{
\omega})\vert \\ &  + \vert P_t f(\tilde{\omega})
-P^{\mathbf{\Lambda}}_t f(\tilde{\omega}) \vert.  
\end{align*}
By the proof of Theorem \ref{lamlimit} and the fact that 
\(\diam ({\mathbf{\Lambda}}) =\varkappa t\), we can find \(T>0\) 
such that  \( t>T/\varkappa\) implies 
\[\left\vert P_t f(\omega)-P^ {\mathbf{\Lambda}}_t f(\omega)
\right\vert+\left\vert P_t f(\tilde{\omega})
-P^ {\mathbf{\Lambda}}_t f(\tilde{\omega}) \right\vert \le
\mathcal{C}_1(f,\omega,\tilde{\omega}) e^{-\theta t/2},\]
where \(\theta\in(0,\infty)\) and \(\mathcal{C}_1(f,\omega,\tilde{\omega}) \)
is a finite  constant depending on the cylinder function \(f\) and configurations \(\omega, \tilde{\omega}\). We also
have \[\left\vert 
P_t^{\mathbf{\Lambda}} f(\omega)-P_t ^{\mathbf{\Lambda}}
f(\tilde{\omega})\right\vert \le \sum_{k \in
{\mathbf{\Lambda}}^{R}}\left\vert  P_t^{\mathbf{\Lambda}}
f(\omega^{k+1})-P_t^{\mathbf{\Lambda}} f(\omega^k)\right\vert,
\] 
with
\({\mathbf{\Lambda}}^R=\{k \in \mathbb{Z}^d:
dist(k, {\mathbf{\Lambda}})\le R\}\) where \(R\) is the range of interaction.
Let \(\gamma:[0,t_{k}]\rightarrow \Omega\) be an admissible path connecting \(\omega^k\) to \({\omega^{k+1}}\), 
such that \(\dot{\gamma}_s=1  \) (recall that \(\omega^k\) and \(\omega^{k+1}\)
differ only in the \(k^{th}\) coordinate, 
so \(t_{k}=d(\omega_k, \tilde{\omega}_k\))).
 The differential inequality \eqref{infdimgradientbd} implies that 
 \begin{equation*}
 \partial_s P_{t-s}^\mathbf{\Lambda}\Gamma_k(P_s ^\mathbf{\Lambda}f) \le -( \bar{\kappa }-\max_{j \in \mathbb{Z}^d }M_{k,j})P_{t-s}^\mathbf{\Lambda}\Gamma_\mathbf{\Lambda}(P_s ^\mathbf{\Lambda}f)
 \end{equation*}
 (recall that $M_{k,j}\equiv0$ when $\vert j - k \vert >R$), which after integration gives  
\begin{equation*}
\Gamma_k(P_t^\mathbf{\Lambda} f)\le e^{-\varsigma t} P_t^\mathbf{\Lambda}(\Gamma_\mathbf{\Lambda}f)
\end{equation*}
with some $\varsigma \in \mathbb{R}$ which is positive for large $\beta$ and can be made independent of \(k\) by our assumption that the quantities \(\Vert
Z_{k,r}\alpha_{j,i}\Vert_\infty\) are uniformly bounded in
\(k,j \in {\mathbb{Z}^d} \).  This observation together with contractivity  property of \(P^{\mathbf{\Lambda}}_{t}\) imply
\begin{align} \label{p_tlambda}
\sum _{k\in{\mathbf{\Lambda}}^{R}}
&\left\vert P_t^{\mathbf{\Lambda}}
f(\omega^{k+1})- P_t^{\mathbf{\Lambda}} f(\omega^k)\right\vert
\le\sum _{k\in{\mathbf{\Lambda}}^{R}}  
\int_0^{t_{k}} 
\sqrt{\Gamma_k (P_t ^{\mathbf{\Lambda}} f(\gamma_s))}  ds \nonumber \\
&\le\sum _{k \in {\mathbf{\Lambda}}^{R}}
(d(\omega_{k}) + d(\tilde{\omega}_k))
\left\Vert\Gamma_{k}(P_t^{\mathbf{\Lambda}} f ) \right\Vert^{\frac{1}{2}}_\infty
\le  \sum_{k\in{\mathbf{\Lambda}}^R} 
(d(\omega_{k})+ d(\tilde{\omega}_k))e^{-\frac{\varsigma t}{2}} 
\left\Vert{ \Gamma_{{\mathbf{\Lambda}}}(f)
}\right\Vert_\infty^{\frac{1}{2}}  \nonumber \\ 
& \le e^{-\frac{\varsigma t}{2}} \left( \sum_{k \in
\mathbb{Z}^d}\Vert{ \Gamma_{k}(f)
}\Vert_\infty\right)^{\frac{1}{2}}  
\sum_{k \in {\mathbf{\Lambda}}^R} 
(d(\omega_{k})+d(\tilde{\omega}_k))  \\  
& \le e^{-\frac{\varsigma t}{2}}  \left( \sum_{k \in
\mathbb{Z}^d} \Vert{ \Gamma_{k}(f)
}\Vert_\infty\right)^{\frac{1}{2}} 
(C_{\omega} + C_{\tilde{\omega}}) (1+\varkappa t)^\zeta \nonumber
\end{align}  
using that \(\vert k \vert \le \varkappa t\) since \(k \in
{\mathbf{\Lambda}}\), with
\(C_{\omega}\equiv\sum_{k \in \mathbb{R}^{\mathbb{Z}^d}}(1+\vert k \vert
)^{-\zeta}d(\omega_k) \) which is finite since \(\omega \in \Omega\) 
(and similarly for $C_{\tilde{\omega}}$). Hence there exists a constant
\(\mathcal{C}_2(f,\omega, \tilde{\omega})\) such that \[
\left\vert  P_t^{\mathbf{\Lambda}}
f(\omega)-P_t ^{\mathbf{\Lambda}} f(\tilde{\omega})\right\vert  
\le
\mathcal{C}_2(f,\omega,\tilde{\omega})(1+\varkappa t)^\zeta e^{- \varsigma
t/2}.\]  \par Combining the above 
we conclude that  \(t>T/\varkappa\equiv t_0\)
implies
\begin{align*}\left\vert P_tf(\omega)-P_t f(\tilde{\omega})\right\vert &
\le { \mathcal{C} }(f,\omega,\tilde{\omega})e^{-\varpi t} ,  
\end{align*}
for some constants \(\varpi >0\) and \( { \mathcal{C}
}(f,\omega,\tilde{\omega})\) 
depending only on the cylinder function \(f\) 
and configurations \(\omega, \tilde{\omega}\).
\end{proof}

\begin{rem} We note that in fact the estimate \eqref{p_tlambda} is sufficiently strong to include the configurations with exponential growth for which $\sum_k e^{-\gamma |k|}d(\omega_k)< \infty$ with any $\gamma < \varsigma /2$ which is much more than a set of measure one.\end{rem}


\subsection{Properties of the Invariant Measure} \label{subsec2.4}
Recall the following representation of a covariance
\begin{equation} \label{eq2.4.1}
P_tf^2 - (P_tf)^2 = 2 \int_0^t\partial_s P_s \bar\Gamma(P_{t-s}f)ds 
\end{equation}
Since $\bar\Gamma \leq \Gamma$, if we have the following bound
$$
\Gamma(P_\tau f)\leq e^{-\kappa\tau}P_\tau\Gamma(f)
$$ 
then \eqref{eq2.4.1} implies the following result

\begin{teo} \label{thm2.4.1}
Under the conditions on the generator $\mathcal{L}$
there exists $\beta_0\in(0,\infty)$ such that for all $\beta>\beta$, any
differentiable function $f$, at any $t>0$ 
\begin{equation} \label{eq2.4.2}
P_tf^2 - (P_tf)^2 \leq \frac2\kappa (1-e^{-\kappa t})\, P_t \Gamma(f). 
\end{equation}
Hence the unique $P_t$-invariant measure $\nu$ satisfies
$$
\nu(f-\nu f)^2\leq \frac2\kappa\, \nu \Gamma(f). 
$$
\end{teo}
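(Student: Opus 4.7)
The plan is to combine the covariance-derivative identity \eqref{eq2.4.1} with the complete gradient bound already established (Theorem \ref{Result_Constant Coefficients} in finite dimensions, and the corresponding infinite-dimensional version derived from Lemma \ref{lem2.1}). First I would verify \eqref{eq2.4.1} by differentiating $s\mapsto P_s(P_{t-s}f)^2$: using the diffusion identity $\mathcal{L}(g^2)-2g\mathcal{L}g=2\bar\Gamma(g)$ (the first-order dilation and drift terms cancel, and only the second-order part contributes to $\bar\Gamma$), one sees that the $s$-derivative equals $2P_s\bar\Gamma(P_{t-s}f)$, so integration between $0$ and $t$ produces the stated representation. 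This step is essentially bookkeeping about which terms survive in $\mathcal{L}(g^2)-2g\mathcal{L}g$, and should be routine given the form of $\mathcal{L}$ in \eqref{Operator}.

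Next I would use the pointwise domination $\bar\Gamma(g)\le \Gamma(g)$ (immediate from the definitions, once $\kappa$ is chosen so that $G^*+I\ge\delta I$ ensures the quadratic form is controlled by $\sum_i|X_i g|^2$) to replace $\bar\Gamma$ by $\Gamma$ inside the integrand. Then I would apply the gradient bound $\Gamma(P_{t-s}f)\le e^{-\kappa(t-s)}P_{t-s}\Gamma(f)$, insert it into the integral, and use the semigroup property $P_sP_{t-s}=P_t$ to factor $P_t\Gamma(f)$ out of the integral. What remains is the scalar integral $\int_0^t e^{-\kappa(t-s)}ds=\kappa^{-1}(1-e^{-\kappa t})$, which yields the claimed bound \eqref{eq2.4.2}. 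The threshold $\beta_0$ is the one produced by Theorem \ref{Result_Constant Coefficients} (or its infinite-dimensional analogue), since that is exactly what is needed to make $\kappa$ strictly positive.

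For the consequence on the invariant measure $\nu$, I would integrate the pointwise inequality \eqref{eq2.4.2} with respect to $\nu$. Invariance gives $\nu(P_tf^2)=\nu f^2$ and $\nu(P_t\Gamma(f))=\nu\Gamma(f)$, so the inequality becomes
\[
\nu f^2 - \nu(P_tf)^2\le \frac{2}{\kappa}(1-e^{-\kappa t})\,\nu\Gamma(f).
\]
Letting $t\to\infty$ and using Theorem \ref{thm2.4} (which gives pointwise exponential convergence $P_tf(\omega)\to\nu f$ on the full-measure set $\Omega$) together with boundedness of $f$ and dominated convergence, we obtain $\nu(P_tf)^2\to(\nu f)^2$, while $1-e^{-\kappa t}\to 1$. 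This yields $\nu(f-\nu f)^2\le (2/\kappa)\,\nu\Gamma(f)$.

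The step I expect to be the most delicate is the passage to the limit $t\to\infty$ in conjunction with a density argument: Theorem \ref{thm2.4} is formulated for bounded smooth cylinder functions, so to apply the final Poincar\'e inequality to a general differentiable $f$ with $\nu\Gamma(f)<\infty$ one must approximate $f$ by such cylinder functions and verify that both sides behave well under the approximation (using, for the right-hand side, that the gradient bound propagates to the limit, and for the left-hand side, lower semicontinuity of the variance under the relevant mode of convergence). Justifying \eqref{eq2.4.1} at the required level of generality on the infinite-dimensional configuration space, where $\mathcal{L}$ is defined as a limit of $\mathcal{L}_\mathbf{\Lambda}$, will also need a short argument approximating $f$ by cylinder functions and using the uniform convergence $P_t^\mathbf{\Lambda}f\to P_t f$ from Theorem \ref{lamlimit}.
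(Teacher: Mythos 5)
Your proposal is correct and follows essentially the same route as the paper: the covariance representation \eqref{eq2.4.1}, the domination $\bar\Gamma\le\Gamma$, the exponential gradient bound of Theorem \ref{Result_Constant Coefficients} inserted under the integral, and then integration against $\nu$ with $t\to\infty$ for the Poincar\'e inequality. The extra care you take (deriving \eqref{eq2.4.1} by differentiating $s\mapsto P_s(P_{t-s}f)^2$, and flagging the cylinder-function approximation needed in infinite dimensions) only fills in details the paper leaves implicit.
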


We mention that by abstract arguments, (see e.g. \cite{G-Z}, Exercise 2.9, and
references therein), the Poincar\'e type inequality \eqref{eq2.4.2} implies a
uniform in $t>0$ exponential bound 
\begin{equation} \label{eq2.4.3}
P_t e^{\delta f} < Const\, e^{\frac{\delta^2}\kappa \Gamma(f)}e^{\delta P_t f}
\end{equation}
provided
\begin{equation} \label{eq2.4.4}
\frac{\delta^2}\kappa||\Gamma(f)||_\infty \leq 1.
\end{equation}
Application of this property
yields the following exponential bound result:
\begin{coroll} \label{thm2.4.2} Under the condition of the Theorem 
\ref{thm2.4.1}
the invariant measure $\nu$ satisfies the following exponential bound
$$
\nu \left( e^{\delta f}\right) < Const\, e^{\frac{\delta^2}\kappa
\Gamma(f)}e^{\delta  \nu(f)}
$$ 
for any function $f$ satisfying \eqref{eq2.4.2} and for which $\nu f$ is well
defined. 
\end{coroll}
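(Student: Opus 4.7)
The idea is to apply the uniform-in-$t$ exponential bound \eqref{eq2.4.3} pointwise and then pass to the limit $t\to\infty$, using the pointwise ergodicity of Theorem \ref{thm2.4}. Under the hypothesis that $\frac{\delta^2}{\kappa}\Vert\Gamma(f)\Vert_\infty\le 1$ (condition \eqref{eq2.4.4}), the abstract argument imported from \cite{G-Z} yields
\[
P_t e^{\delta f}(\omega) \;\le\; \mathrm{Const}\, e^{\frac{\delta^2}{\kappa}\Gamma(f)(\omega)}\, e^{\delta P_t f(\omega)}
\]
at every $\omega\in\Omega$ and every $t>0$, with a constant independent of $t$ and $\omega$.

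Fixing $\omega\in\Omega$, I would let $t\to\infty$ in this pointwise inequality. For the right hand side, Theorem \ref{thm2.4} applied to $f$ gives $P_t f(\omega)\to \nu(f)$, hence $e^{\delta P_t f(\omega)}\to e^{\delta \nu(f)}$. For the left hand side, when $f$ is a bounded smooth cylinder function, so is $e^{\delta f}$, and a second application of Theorem \ref{thm2.4} (to $e^{\delta f}$ in place of $f$) yields $P_t e^{\delta f}(\omega)\to \nu(e^{\delta f})$. Passing to the limit in the displayed inequality then gives, at any $\omega\in\Omega$,
\[
\nu(e^{\delta f}) \;\le\; \mathrm{Const}\, e^{\frac{\delta^2}{\kappa}\Gamma(f)(\omega)}\, e^{\delta \nu(f)}.
\]
Since the left hand side does not depend on $\omega$, one can either take the infimum in $\omega$ or replace $\Gamma(f)(\omega)$ by $\Vert\Gamma(f)\Vert_\infty$ to recover the claimed exponential bound.

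The main obstacle is to justify the step $P_t e^{\delta f}(\omega)\to \nu(e^{\delta f})$ when $f$ is only assumed to be such that $\nu f$ is well defined, rather than a bounded smooth cylinder function: then $e^{\delta f}$ need not fit into the framework of Theorem \ref{thm2.4}. The natural remedy is a truncation: carry out the above argument for suitable bounded smooth cylinder approximations $f_M$ (obtained, for instance, by smooth cutoffs of $f$ at height $\pm M$ and localisation to a large box $\mathbf{\Lambda}_M\Subset\mathbb{Z}^d$), then let $M\to\infty$. Here the uniform bound \eqref{eq2.4.3} and the assumption that $\nu f$ is well defined provide exactly the $t$- and $M$-uniform integrability needed to interchange the two limits and conclude.
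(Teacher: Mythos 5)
Your proposal is correct and follows essentially the same route the paper intends: the paper's entire argument for this corollary is the one-line remark that the uniform-in-$t$ bound \eqref{eq2.4.3} (obtained from the Poincar\'e inequality \eqref{eq2.4.2} by the abstract argument of \cite{G-Z}) "yields the result", i.e.\ apply \eqref{eq2.4.3} pointwise and let $t\to\infty$ using the ergodicity of Theorem \ref{thm2.4}, exactly as you do. Your additional remarks on passing to the limit on the left-hand side and on truncating unbounded $f$ simply make explicit details the paper leaves unstated.
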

{\textbf{Remark}}{\textsl{ An interesting question arises, which was also a part
of motivation to our
work, whether the measure $\nu$ can satisfy stronger coercive inequalities
as for example Log-Sobolev inequality.
The known strategy of \cite{B-E} to obtain log-Sobolev requires bounds with
$\Gamma_1$ and and unfortunately fails in cases of interest to us in this paper. }\\

{\textbf{Remark}}{\textsl{
Note that knowing a bit of regularity one can slightly optimize \eqref{eq2.4.1}
as follows. First we use 
$$
P_tf^2 - (P_tf)^2 \leq \int_0^{t-\varepsilon} ds 2 P_s\Gamma (P_{t-s}f) + 
\int_{t-\varepsilon}^t ds 2 P_s\Gamma(P_{t-s}f)
$$
If one would have the following regularity estimate
$$
\Gamma (P_\varepsilon f) \leq \bar c(\varepsilon)\Gamma_1(f)
$$
then we get
$$
\int_0^{t-\varepsilon} ds 2 P_s\Gamma (P_{t-s}f) = \int_0^{t-\varepsilon} ds 2
P_s\Gamma (P_{t-\varepsilon-s}P_\varepsilon f) 
\leq 2 \bar c(\varepsilon) \int_0^{t-\varepsilon} ds
e^{-\kappa(t-\varepsilon-s)} P_{t-\varepsilon}\Gamma_1(f) 
$$
$$
\leq \frac2\kappa  \bar c(\varepsilon) (1-e^{-\kappa (t-\varepsilon)})
P_{t-\varepsilon}\Gamma_1(f) 
\leq \frac2\kappa  \bar c(\varepsilon) P_{t-\varepsilon}\Gamma_1(f) 
$$
On the other hand we have
$$
\int_{t-\varepsilon}^t ds 2 P_s\Gamma(P_{t-s}f) \leq
\frac2\kappa(1-e^{-\kappa\varepsilon}) P_{t}\Gamma (f) 
\leq  \varepsilon\frac2\kappa \varepsilon P_{t}\Gamma (f) 
$$
Hence, with $\gamma(\varepsilon)$ given as an inverse function of
$c(\varepsilon) \equiv \frac2\kappa \bar c(\frac\kappa2 \varepsilon)$, we obtain
$$
P_tf^2 - (P_tf)^2 \leq \varepsilon 
P_{t-\varepsilon}\Gamma_1(f) + \gamma(\varepsilon) P_t\Gamma(f)   
$$
After passing with time to infinity we obtain
$$
\nu (f - \nu f)^2 \leq \varepsilon \nu \Gamma_1(f) +
\gamma(\varepsilon)\nu\Gamma(f) 
$$
which after optimisation with respect to the free parameter $\varepsilon$ implies a
generalised Nash type inequality.  
}}

\end{document}